\newtheorem{theorem}{Theorem}
\newtheorem{proposition}{Proposition}
\newtheorem{corollary}{Corollary}
\theoremstyle{definition}
\newtheorem{definition}{Definition}
\newtheorem{Ex}{Example}
\newcommand{\nulll}{\hskip4pt}
\DeclareMathOperator{\gyr}{gyr}
\DeclareMathOperator{\Aut}{Aut}
\DeclareMathOperator{\LCay}{L-Cay}
\DeclareMathOperator{\RCay}{R-Cay}
\begin{document}

\title{On transitivity and connectedness of Cayley graphs of gyrogroups}
\author[1]{Rasimate Maungchang}
\author[1]{Prathomjit Khachorncharoenkul}
\author[1]{Kiattisak Prathom}
\author[2]{Teerapong Suksumran\footnote{Corresponding author.}$^{,}$\,}
\affil[1]{School of Science, Walailak University, Nakhon Si Thammarat 80160, Thailand}
\affil[2]{Department of Mathematics, Faculty of Science, Chiang Mai University, Chiang Mai 50200, Thailand}
\renewcommand\Authands{ and }

\date{\today}
\maketitle

\begin{abstract}
    In this work, we explore edge direction, transitivity, and connectedness of Cayley graphs of gyrogroups. More specifically, we find conditions for a Cayley graph of a gyrogroup to be undirected, transitive, and connected. We  also show a relationship between the cosets of a certain type of subgyrogroups and the connected components of Cayley graphs.  Some examples regarding these findings are provided.
\end{abstract}
\textbf{2010 Mathematics Subject Classification:} 20C99; 05C25\\
\textbf{Keywords:} gyrogroup; Cayley graph; undirected graph; transitivity; connectedness\\

\section{Introduction}
\label{sec:Introduction}
Gyrogroups, a generalization of groups whose the associativity is replaced with a more general one, have been one of the fast growing area in Mathematics recently. The structure was introduced by A. A. Ungar in the attempt to find a proper structure to Einstein's velocity model, see~\cite{Ungar} for more details. Since then, many properties, including geometric, algebraic, and topological properties of gyrogroups have been studied. 

For a groups, regarding the combinatorial property,  its Cayley graph is considered as a combinatorial representation of that group, giving a visualization to its algebraic structure. The vertices of a Cayley graph of a group $G$ is the elements of $G$ and there is a directed edge from a vertex $u$ to a vertex $v$, denoted $u\to v,$ if $v=su$ for some $s\in G$. We will call this graph a \textit{left Cayley graph} or an \textit{L-Cayley graph}. Another definition is defined by changing the edge condition to $u\to v$ if $v=us$ for some $s\in G$, giving a \textit{right Cayley graph} or an \textit{R-Cayley graph}.
It is then natural to think about the same combinatorial structures of gyrogroups for they are a generalization of groups. Some preliminary properties and examples of L-Cayley graphs of gyrogroups have been studied in \cite{LAS}. In that study, a property on connectedness of Cayley graphs of gyrogroups has been proved. 

In this study, we further explore  these combinatorial structures of gyrogroups, more precisely, we study properties of transitivity of L-Cayley graphs of gyrogroups, edge direction, transitivity, connectedness and  connected components of  R-Cayley graphs of gyrogroups.

\textbf{Outline of the paper.} We give necessary definitions and background knowledge in Section~\ref{sec:Background}, including the definitions of L-Cayley graphs and R-Cayley graphs of gyrogroups.
In Section~\ref{sec:Transitivity_L} we give some sufficient conditions for an L-Cayley graph of a gyrogroup to be transitive together with an example. Section~\ref{sec:Transitivity_R} is devoted for R-Cayley graphs of gyrogroups. In this section, we give a sufficient and necessary condition of an R-Cayley graph to be undirected, give a sufficient condition for it to be transitive, and show a connection of the cosets of L-subgyrogroups with the connected components of the graph.
A few examples illustrating these results are also given.

\section{Background}
\label{sec:Background}
Included in this section are necessary background. We give  important definitions and algebraic identities regarding gyrogroups as well as the definitions of two types of Cayley graphs of gyrogroups. For more detailed knowledge of gyrogroups, we recommend readers to see~\cite{Ungar}, ~\cite{Ungar1}, and~\cite{Suksumran2016}. 

Let $(G,\oplus)$ be a groupoid. Sometimes, we will simply call it $G$ when there is no possible confusion. An \textit{automorphism} $f$ on $G$ is a bijection from $G$ to itself with the property that $f(g_1\oplus g_2)=f(g_1)\oplus f(g_2)$, for all $g_1,g_2\in G$. The set of all automorphisms on $G$ is denoted by $\Aut(G,\oplus)$.

\begin{definition}[Definition 2.7 of \cite{Ungar}]
\label{gyrogroup}
        Let $(G,\oplus)$ be a nonempty groupoid. We say that $G$  is a \textit{gyrogroup} if the following hold:
        \begin{enumerate}
                \item There is a unique identity element $e \in G$ such that
                \begin{center}
                        $e\oplus x = x = x\oplus e$ \ \ \ \ \ for all $x \in G$.
                \end{center}
                \item For each $x \in G$, there exists a unique \textit{inverse} element $\ominus x \in G$ such that 
                \begin{center}
                        $\ominus x\oplus x = e = x\oplus (\ominus x)$.
                \end{center}
                \item For any $x, y \in G$, there exists $\gyr[x,y] \in \Aut(G,\oplus)$ such that
                \begin{center}
                        $x\oplus(y\oplus z) = (x\oplus y)\oplus \gyr[x,y](z)$
                \end{center}
                for all $z \in G$.\hfill \textnormal{(left gyroassociative law)}
                \item For any $x,y \in G$, $\gyr[x\oplus y,y] = \gyr[x,y]$.\hfill \textnormal{(left loop property)}        
        \end{enumerate}

\end{definition}

\begin{Ex}[Example 8, p. 60 of \cite{Suksumran2016}]
An example of a finite gyrogroup of order $15$ is $G_{15} = \{0, 1, 2,\ldots, 14\}$ whose operation is given by Table \ref{tab: operation G15}. Its gyration table is described by Table \ref{tab: gyration G15}. In cyclic notation, four nonidentity gyroautomorphisms of $G_{15}$ can be expressed as in (\ref{eqn:
gyration of G15}):
\begin{eqnarray}\label{eqn: gyration of G15}
\begin{split}
A &= (1\nulll 7 \nulll 5 \nulll 10 \nulll 6)(2 \nulll 3 \nulll 8
\nulll 11 \nulll 14),\\
B &= (1\nulll 6 \nulll 10 \nulll 5 \nulll 7)(2 \nulll 14 \nulll 11
\nulll 8 \nulll 3),\\
C &= (1\nulll 10 \nulll 7 \nulll 6 \nulll 5)(2 \nulll 11 \nulll 3
\nulll 14 \nulll 8),\\
D &= (1\nulll 5 \nulll 6 \nulll 7 \nulll 10)(2 \nulll 8 \nulll 14
\nulll 3 \nulll 11).
\end{split}
\end{eqnarray}

\begin{table}
\centering
\begin{tabular}{|c|ccccccccccccccc|}
\hline
$\oplus$ & 0 & 1 & 2 & 3 & 4 & 5 & 6 & 7 & 8 & 9 & 10 & 11 & 12 & 13 & 14\\ \hline
0  & 0  & 1  & 2  & 3  & 4  & 5  & 6  & 7  & 8  & 9  & 10 & 11 & 12 & 13 & 14 \\ 
1  & 1  & 2  & 0  & 4  & 6  & 11 & 3  & 14 & 13 & 7  & 8  & 12 & 5  & 10 & 9 \\ 
2  & 2  & 0  & 1  & 6  & 3  & 12 & 4  & 9  & 10 & 14 & 13 & 5  & 11 & 8  & 7 \\ 
3  & 3  & 4  & 5  & 7  & 8  & 9  & 13 & 0  & 1  & 2  & 12 & 6  & 14 & 11 & 10\\ 
4  & 4  & 10 & 8  & 11 & 13 & 1  & 5  & 6  & 14 & 0  & 7  & 2  & 9  & 12 & 3\\ 
5  & 5  & 14 & 12 & 9  & 7  & 8  & 2  & 11 & 0  & 10 & 3  & 4  & 6  & 1  & 13\\ 
6  & 6  & 11 & 4  & 13 & 10 & 3  & 14 & 8  & 12 & 1  & 2  & 9  & 7  & 5  & 0\\ 
7  & 7  & 8  & 9  & 0  & 1  & 2  & 11 & 3  & 4  & 5  & 14 & 13 & 10 & 6  & 12\\ 
8  & 8  & 13 & 6  & 10 & 11 & 0  & 12 & 4  & 5  & 3  & 9  & 7  & 2  & 14 & 1\\ 
9  & 9  & 5  & 11 & 14 & 0  & 6  & 7  & 10 & 2  & 12 & 1  & 3  & 13 & 4  & 8\\ 
10 & 10 & 3  & 13 & 12 & 5  & 14 & 8  & 2  & 9  & 6  & 11 & 0  & 1  &  7 & 4\\ 
11 & 11 & 12 & 7  & 1  & 14 & 4  & 9  & 13 & 6  & 8  & 0  & 10 & 3  &  2 & 5\\ 
12 & 12 & 6  & 3  & 8  & 9  & 7  & 10 & 1  & 11 & 13 & 5  & 14 & 4  & 0  & 2\\ 
13 & 13 & 7  & 14 & 2  & 12 & 10 & 1  & 5  & 3  & 4  & 6  & 8  & 0  & 9  & 11\\ 
14 & 14 & 9  & 10 & 5  & 2  & 13 & 0  & 12 & 7  & 11 & 4  & 1  & 8  & 3  & 6\\ \hline
\end{tabular}
\caption{Addition table for the gyrogroup $G_{15}$.} \label{tab: operation G15}
\end{table}

\begin{table}
\centering
\begin{tabular}{|c|ccccccccccccccc|}
\hline
0 & $I$ & $I$ & $I$ & $I$ & $I$ & $I$ & $I$ & $I$ & $I$ & $I$ & $I$ & $I$ & $I$ & $I$ & $I$ \\ \hline
1 & $I$ & $I$ & $I$ & $A$ & $A$ & $B$ & $C$ & $D$ & $D$ & $B$ & $A$ & $C$ & $C$ & $D$ & $B$ \\ 
2 & $I$ & $I$ & $I$ & $D$ & $B$ & $D$ & $B$ & $A$ & $B$ & $A$ & $C$ & $A$ & $D$ & $C$ & $C$ \\ 
3 & $I$ & $B$ & $C$ & $I$ & $B$ & $A$ & $C$ & $I$ & $D$ & $A$ & $D$ & $B$ & $D$ & $C$ & $A$ \\ 
4 & $I$ & $B$ & $A$ & $A$ & $I$ & $B$ & $B$ & $B$ & $A$ & $I$ & $B$ & $A$ & $I$ & $I$ & $A$ \\ 
5 & $I$ & $A$ & $C$ & $B$ & $A$ & $I$ & $B$ & $C$ & $I$ & $B$ & $D$ & $A$ & $C$ & $D$ & $D$ \\ 
6 & $I$ & $D$ & $A$ & $D$ & $A$ & $A$ & $I$ & $B$ & $C$ & $B$ & $C$ & $B$ & $C$ & $D$ & $I$ \\ 
7 & $I$ & $C$ & $B$ & $I$ & $A$ & $D$ & $A$ & $I$ & $A$ & $B$ & $B$ & $D$ & $C$ & $D$ & $C$ \\ 
8 & $I$ & $C$ & $A$ & $C$ & $B$ & $I$ & $D$ & $B$ & $I$ & $A$ & $A$ & $D$ & $D$ & $C$ & $B$ \\ 
9 & $I$ & $A$ & $B$ & $B$ & $I$ & $A$ & $A$ & $A$ & $B$ & $I$ & $A$ & $B$ & $I$ & $I$ & $B$ \\ 
10 & $I$ & $B$ & $D$ & $C$ & $A$ & $C$ & $D$ & $A$ & $B$ & $B$ & $I$ & $I$ & $C$ & $D$ & $A$ \\
11 & $I$ & $D$ & $B$ & $A$ & $B$ & $B$ & $A$ & $C$ & $C$ & $A$ & $I$ & $I$ & $D$ & $C$ & $D$ \\ 
12 & $I$ & $D$ & $C$ & $C$ & $I$ & $D$ & $D$ & $D$ & $C$ & $I$ & $D$ & $C$ & $I$ & $I$ & $C$ \\ 
13 & $I$ & $C$ & $D$ & $D$ & $I$ & $C$ & $C$ & $C$ & $D$ & $I$ & $C$ & $D$ & $I$ & $I$ & $D$ \\ 
14 & $I$ & $A$ & $D$ & $B$ & $B$ & $C$ & $I$ & $D$ & $A$ & $A$ & $B$ & $C$ & $D$ & $C$ & $I$ \\ \hline
\end{tabular}
\caption{Gyration table for $G_{15}$. Here, $I$ denotes the identity
automorphism of $G_{15}$, $A, B, C$ and $D$ are given by (\ref{eqn:
gyration of G15}).} \label{tab: gyration G15}
\end{table}

With the absence of associativity, the gyrogroup $G_{15}$ is not a group. For any elements $a, b, c$ in $G_{15}$, the gyroautomorphism $\gyr[a,b]$ comes from the identity,  called the gyrator identity, which is true in general for every gyrogroup:
\begin{equation}\tag{\textnormal{gyrator identity}}
\gyr[a,b]c=\ominus(a\oplus b)\oplus(a\oplus(b\oplus c).
\end{equation}
\end{Ex}

In this work we will work on finite gyrogroups. Inspired by the solution of the equation $x\oplus a=b$, Ungar introduced a binary operation in $G$ called the \textit{gyrogroup coaddition} or \textit{coaddition} $\boxplus$, defined by
\[a\boxplus b=a\oplus\gyr[a,\ominus b]b,\]
for all $a,b\in G$. We write $a\boxminus b$ for $a\boxplus \ominus b$. Then the solution to the equation $x\oplus a=b$ is $x=b\boxminus a$.

Many identities regarding the gyrogroup addition and coaddition have been discovered and can be found together with the proofs in~\cite{Ungar}. We list some identities needed later in this work here.  

\begin{theorem}[\cite{Ungar}]
\label{T:identities}
Let $(G,\oplus)$ be a gyrogroup. For any $a,b,c$, the following properties hold:

\begin{enumerate}
\item if $a\oplus b=a\oplus c$, then $b=c$;\hfill \textnormal{(general left cancellation law)} 
\label{Id:general left cancellation law}

\item $\ominus a\oplus(a\oplus b)=b$;\hfill \textnormal{(left cancellation law)}
\label{Id:left cancellation law}

\item $(a\ominus b)\boxplus b=a$;\hfill \textnormal{(right cancellation law I)}
\label{Id:right cancellation law I}

\item $(a\boxminus b)\oplus b=a$;\hfill \textnormal{(right cancellation law II)}
\label{Id:right cancellation law II}

\item $(a\oplus b)\oplus c=a\oplus(b\oplus\gyr[b,a]c)$;\hfill \textnormal{(right gyroassociative law)}
\label{Id:right gyroassociative law}

\item $\gyr[a,b](\ominus c)=\ominus\gyr[a,b]c$.
\label{Id:inverse through gyro}

\end{enumerate}
\end{theorem}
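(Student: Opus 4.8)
Since Theorem~\ref{T:identities} collects classical gyrogroup identities, the plan is to bootstrap them from the four axioms of Definition~\ref{gyrogroup}, roughly in the listed order, leaning repeatedly on the left gyroassociative law and the left loop property. Three of the items are essentially free. For~\ref{Id:general left cancellation law}, from $a\oplus b=a\oplus c$ add $\ominus a$ on the left and rewrite $\ominus a\oplus(a\oplus x)=(\ominus a\oplus a)\oplus\gyr[\ominus a,a](x)=\gyr[\ominus a,a](x)$ using the left gyroassociative law; since $\gyr[\ominus a,a]$ is an automorphism, hence injective, $b=c$. For~\ref{Id:inverse through gyro}, observe that $\gyr[a,b]\in\Aut(G,\oplus)$ maps $e$ to a left identity (use surjectivity), which must be $e$; then $\gyr[a,b](c)\oplus\gyr[a,b](\ominus c)=\gyr[a,b](c\oplus\ominus c)=e$, so $\gyr[a,b](\ominus c)=\ominus\gyr[a,b](c)$ by uniqueness of inverses. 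Before~\ref{Id:left cancellation law} I would record the auxiliary facts $\gyr[e,x]=I=\gyr[x,e]$ (put $e$ in the first, resp.\ second, entry of the left gyroassociative law and cancel by~\ref{Id:general left cancellation law}) and $\gyr[\ominus x,x]=I$ (left loop property: $\gyr[\ominus x,x]=\gyr[\ominus x\oplus x,x]=\gyr[e,x]=I$); then~\ref{Id:left cancellation law} reads $\ominus a\oplus(a\oplus b)=(\ominus a\oplus a)\oplus\gyr[\ominus a,a](b)=e\oplus b=b$. I would also note $\ominus\ominus b=b$, since $\ominus b\oplus b=e$ exhibits $b$ as the inverse of $\ominus b$.

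For~\ref{Id:right cancellation law I}, unfold the coaddition and apply the left loop property $\gyr[a\ominus b,\ominus b]=\gyr[a,\ominus b]$: then $(a\ominus b)\boxplus b=(a\ominus b)\oplus\gyr[a,\ominus b](b)=(a\oplus\ominus b)\oplus\gyr[a,\ominus b](b)=a\oplus(\ominus b\oplus b)=a$, the last equality being the left gyroassociative law read from right to left. For~\ref{Id:right cancellation law II}, I would first prove the companion identity $(a\oplus b)\boxminus b=a$: with $c=a\oplus b$ one has $c\boxminus b=c\oplus\gyr[c,b](\ominus b)$, and $\gyr[c,b]=\gyr[a\oplus b,b]=\gyr[a,b]$ by the left loop property, whence $c\boxminus b=(a\oplus b)\oplus\gyr[a,b](\ominus b)=a\oplus(b\oplus\ominus b)=a$. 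This says that $c\mapsto c\boxminus b$ is a left inverse of the right translation $R_b\colon a\mapsto a\oplus b$; since the gyrogroups here are finite, $R_b$ is then a bijection with two-sided inverse $c\mapsto c\boxminus b$, that is, $(c\boxminus b)\oplus b=c$, which is~\ref{Id:right cancellation law II}. (For general gyrogroups one would instead derive it from the inversion law below.)

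The genuinely substantial item is~\ref{Id:right gyroassociative law}, which I expect to be the main obstacle. Substituting $z=\gyr[b,a]c$ into the left gyroassociative law gives $a\oplus(b\oplus\gyr[b,a]c)=(a\oplus b)\oplus\gyr[a,b]\gyr[b,a](c)$, so~\ref{Id:right gyroassociative law} is equivalent to the inversion law $\gyr^{-1}[a,b]=\gyr[b,a]$. To obtain that, I would expand $a\oplus(b\oplus(c\oplus z))$ in two ways --- grouping the inner two sums first and then applying the left gyroassociative law, versus distributing the automorphism $\gyr[a,b]$ across $c\oplus z$ and regrouping --- and, after recognizing the common summand $a\oplus(b\oplus c)=(a\oplus b)\oplus\gyr[a,b]c$ and cancelling it via~\ref{Id:general left cancellation law}, read off the gyration nesting identity $\gyr[a,b\oplus c]\gyr[b,c]=\gyr[a\oplus b,\gyr[a,b]c]\gyr[a,b]$. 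The delicate step --- and the one where I would follow~\cite{Ungar} rather than reconstruct every detail --- is extracting $\gyr^{-1}[a,b]=\gyr[b,a]$ from this nesting identity: one specializes it (for instance to $\gyr[x,\ominus x]=I$ and similar reductions) and feeds those back in together with the left loop property to collapse the composite gyrations. Once $\gyr^{-1}[a,b]=\gyr[b,a]$ is in hand,~\ref{Id:right gyroassociative law} follows at once from the substitution above.
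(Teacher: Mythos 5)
The paper itself offers no proof of this theorem: it is quoted from Ungar's book \cite{Ungar} precisely so that the identities can be used later, so there is no in-paper argument to compare yours against, and your proposal must stand on its own. Most of it does. Your derivations of items \ref{Id:general left cancellation law}, \ref{Id:left cancellation law}, \ref{Id:right cancellation law I} and \ref{Id:inverse through gyro}, together with the auxiliary facts $\gyr[e,x]=\gyr[x,e]=I$, $\gyr[\ominus x,x]=I$ and $\gyr[a,b](e)=e$, are correct and are the standard arguments. The reduction of item \ref{Id:right gyroassociative law} to the gyroautomorphism inversion law $\gyr^{-1}[a,b]=\gyr[b,a]$, and the two-way expansion of $a\oplus(b\oplus(c\oplus z))$ giving the composition identity $\gyr[a,b\oplus c]\gyr[b,c]=\gyr[a\oplus b,\gyr[a,b]c]\gyr[a,b]$, are also sound.

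There are, however, two genuine gaps. First, the only substantial item, \ref{Id:right gyroassociative law}, is reduced but not proved: the decisive step of extracting $\gyr^{-1}[a,b]=\gyr[b,a]$ from the composition identity is exactly the step you decline to carry out and instead defer to \cite{Ungar}. Specializing $c=\ominus b$ only yields $\gyr[a,b]^{-1}=\gyr[a\oplus b,\ominus\gyr[a,b]b]$; turning that into $\gyr[b,a]$ still needs something like the gyrosum inversion law $\ominus(a\oplus b)=\gyr[a,b](\ominus b\ominus a)$ plus a further loop-property manipulation, none of which appears in your outline, so at its crux your proof is a citation, just as the paper's is. Second, item \ref{Id:right cancellation law II} is established only for finite gyrogroups (left-invertible implies bijective for the right translation $R_b$), which matches this paper's standing finiteness assumption but not the statement ``Let $(G,\oplus)$ be a gyrogroup''; moreover the parenthetical claim that the general case follows ``from the inversion law below'' is too quick. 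Even with $\gyr^{-1}[a,b]=\gyr[b,a]$ and the right gyroassociative law in hand, verifying $(b\boxminus a)\oplus a=b$ requires in addition a nested gyration identity such as $\gyr[\ominus\gyr[b,a]a,\,b]=\gyr[b,a]$ (equivalently, an appeal to the right loop property), which your argument never produces.
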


The fourth author has thoroughly studied algebraic properties of gyrogroups analogous to those of groups; among the work, the following definitions and theorems are important to our work. We encourage
readers to see~\cite{Suksumran2016}
for more explanations and motivations.

\begin{definition}
A nonempty subset $H$ of a gyrogroup $(G,\oplus)$ is a\textit{ subgyrogroup} of $G$ if $(H,\oplus)$ is a gyrogroup and $\gyr[a,b](H)=H$ for all $a,b\in H$. It is called an \textit{L-subgyrogroup} of $G$ if $\gyr[a,b](H)=H$ for all $a\in G$ and $h\in H$.
\end{definition}

\begin{theorem}
\label{T:partition}
If $H$ is an L-subgyrogroup of a gyrogroup $G$, then the set $\{g\oplus H\mid g\in G\}$ forms a partition of $G$.
\end{theorem}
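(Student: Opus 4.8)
The plan is to show that the family of left cosets $\{g \oplus H \mid g \in G\}$ covers $G$ and that any two cosets are either identical or disjoint. The covering is immediate: for any $g \in G$ we have $g = g \oplus e \in g \oplus H$ since $e \in H$. So the real content is the disjointness dichotomy, and this is where I expect the main work to lie.

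First I would reduce the problem to a membership criterion. The key claim is that $a \in b \oplus H$ if and only if $b \oplus H = a \oplus H$; granting this, if two cosets $a \oplus H$ and $b \oplus H$ share a common element $c$, then $c \in a \oplus H$ gives $c \oplus H = a \oplus H$, and likewise $c \oplus H = b \oplus H$, whence $a \oplus H = b \oplus H$. So everything comes down to proving that equivalence. One direction is trivial ($a = a \oplus e \in a \oplus H$, so if $a \oplus H = b \oplus H$ then $a \in b \oplus H$). For the other direction, suppose $a \in b \oplus H$, say $a = b \oplus h_0$ for some $h_0 \in H$. I want to show the two sets $a \oplus H$ and $b \oplus H$ coincide by double inclusion.

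For the inclusion $a \oplus H \subseteq b \oplus H$: take $a \oplus h$ with $h \in H$. Then $a \oplus h = (b \oplus h_0) \oplus h$, and by the right gyroassociative law (Theorem~\ref{T:identities}\eqref{Id:right gyroassociative law}) this equals $b \oplus (h_0 \oplus \gyr[h_0, b] h)$. Now the crucial point: since $H$ is an \emph{L}-subgyrogroup, $\gyr[h_0, b]$ maps $H$ onto $H$ --- wait, the L-subgyrogroup condition as stated is $\gyr[a,b](H) = H$ for $a \in G$ and $b \in H$ (reading the roles carefully), so I need $\gyr[h_0, b]$ with the first argument in $H$; if the definition gives $\gyr[g, h](H) = H$ for $g \in G, h \in H$, I may need the companion fact that this also forces $\gyr[h, g](H) = H$, which follows from $\gyr[h,g] = \gyr[g, h]^{-1}$ when... actually I will instead arrange the cancellation so the gyration that appears already has the right argument pattern, or invoke that gyroautomorphisms of an L-subgyrogroup restrict to automorphisms in both argument orders. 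In any case, $\gyr[h_0, b] h \in H$, hence $h_0 \oplus \gyr[h_0, b] h \in H$ because $H$ is closed under $\oplus$, and therefore $a \oplus h \in b \oplus H$. For the reverse inclusion $b \oplus H \subseteq a \oplus H$: from $a = b \oplus h_0$ and the left cancellation law, $b = \ominus h_0 \oplus \text{(something)}$ --- more cleanly, using right cancellation, $b \oplus H = (b \oplus h_0) \oplus H' $-type manipulation, or simply apply the argument just given with the roles of $a$ and $b$ swapped after checking $b \in a \oplus H$ (which follows by solving $a = b \oplus h_0$ for $b$ using the gyrogroup inverse operations, landing $b \in a \oplus H$ via $\ominus h_0$ and a gyration, again using the L-subgyrogroup property to keep everything inside $H$).

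The main obstacle is precisely the bookkeeping of \emph{which} gyration appears and \emph{whether its relevant argument lies in $H$}, since the L-subgyrogroup hypothesis only guarantees stability of $H$ when exactly one argument ranges over $H$. I expect to need the auxiliary observation that if $\gyr[g,h](H) = H$ for all $g \in G$, $h \in H$, then also $\gyr[g,h]^{-1}(H) = H$, and combined with standard gyration inversion identities this yields enough flexibility to push every intermediate element back into $H$. Once that is in hand the inclusions are routine applications of the right gyroassociative law and closure of $H$ under $\oplus$, and the partition statement follows as described above.
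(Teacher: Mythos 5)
The paper itself does not prove Theorem~\ref{T:partition}; it is quoted as background (it is the standard partition theorem for L-subgyrogroups from Suksumran's work), so there is no in-paper proof to measure you against, and I will judge your argument on its own. Your outline is correct and is essentially the standard one: covering follows from $e\in H$, the partition reduces to the claim that $a\in b\oplus H$ forces $a\oplus H=b\oplus H$, and you prove that by double inclusion using the right gyroassociative law and closure of $H$ under $\oplus$ and $\ominus$. The one step you flagged but left hanging is the genuine crux: you need $\gyr[h_0,b](H)=H$ with $h_0\in H$, $b\in G$, whereas the L-subgyrogroup hypothesis is stated for $\gyr[g,h]$ with $g\in G$, $h\in H$. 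The fix you name does work: by the gyroautomorphism inversion law $\gyr[b,a]=\gyr[a,b]^{-1}$ (standard in Ungar, though not among the identities listed in Theorem~\ref{T:identities}), $\gyr[h_0,b](H)=\gyr[b,h_0]^{-1}(H)=H$ because $\gyr[b,h_0]$ is a bijection of $G$ stabilizing $H$; likewise your recovery of $b=a\oplus\gyr[a,h_0](\ominus h_0)\in a\oplus H$ uses the hypothesis in exactly its stated argument order, so the reverse inclusion is fine. If you want to avoid invoking the inversion law altogether, run the computation in the other direction: for $h\in H$, the left gyroassociative law gives $b\oplus(h_0\oplus h)=(b\oplus h_0)\oplus\gyr[b,h_0]h=a\oplus\gyr[b,h_0]h$; since $h_0\oplus H=H$ (closure plus $x=\ominus h_0\oplus h$ solves $h_0\oplus x=h$) and $\gyr[b,h_0](H)=H$ is the hypothesis in precisely its given pattern, taking the set of these equalities as $h$ ranges over $H$ yields $b\oplus H=a\oplus H$ in one stroke, with no double inclusion and no flipped gyration. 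So: correct proposal, with the hedged step admitting exactly the justification you suggested, and a slightly cleaner variant available that dodges the issue entirely.
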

From Theorem \ref{T:partition}, when $H$ is an L-subgyrogroup of a gyrogroup $G$, we will call each $g\oplus G$, \textit{a left coset}.

\begin{theorem}[Theorem 21 of \cite{suksumran2015}, Lagrange's Theorem for L-Subgyrogroups]
\label{T:Lagrange Lsub}
If $H$ is an L-subgyrogroup of a finite gyrogroup $G$, then $|H|$ divides $|G|$.
\end{theorem}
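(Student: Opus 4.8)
The plan is to mimic the classical proof of Lagrange's theorem for groups, leaning on Theorem~\ref{T:partition} to supply the partition of $G$ into left cosets of $H$, and on the left cancellation law of Theorem~\ref{T:identities} to show that every such coset is equinumerous with $H$. Once both ingredients are in place, counting the elements of $G$ block by block yields that $|H|$ divides $|G|$.

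First I would fix $g\in G$ and consider the left translation $L_g\colon H\to g\oplus H$ defined by $L_g(h)=g\oplus h$. By the very definition of the coset $g\oplus H=\{g\oplus h\mid h\in H\}$, the map $L_g$ is surjective. For injectivity, suppose $L_g(h_1)=L_g(h_2)$, that is, $g\oplus h_1=g\oplus h_2$ with $h_1,h_2\in H$; the general left cancellation law (item~\ref{Id:general left cancellation law} of Theorem~\ref{T:identities}) gives $h_1=h_2$. Hence $L_g$ is a bijection and $|g\oplus H|=|H|$ for every $g\in G$. I would stress here that this step uses nothing about $H$ being an L-subgyrogroup beyond its being a subset; the L-subgyrogroup hypothesis is precisely what makes Theorem~\ref{T:partition} applicable.

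Next, by Theorem~\ref{T:partition} the distinct left cosets of $H$ in $G$, say $C_1,C_2,\ldots,C_k$, are pairwise disjoint and satisfy $\bigcup_{i=1}^{k}C_i=G$. Since each $C_i$ equals $g_i\oplus H$ for some $g_i\in G$, the previous paragraph gives $|C_i|=|H|$ for all $i$. Therefore
\[
|G|=\sum_{i=1}^{k}|C_i|=k\,|H|,
\]
so $|H|$ divides $|G|$, as claimed.

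I do not expect any genuine obstacle here: the delicate point — that left cosets of an L-subgyrogroup actually partition the ambient gyrogroup, which can fail for an arbitrary subgyrogroup — has been isolated into Theorem~\ref{T:partition}, and what remains is the same counting argument as for groups. The only things to watch are to invoke the left cancellation law (valid in every gyrogroup) rather than any right-handed analogue, and to sum over the \emph{distinct} cosets rather than indexing by all of $G$.
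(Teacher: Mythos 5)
Your argument is correct: the paper itself only quotes this result from \cite{suksumran2015} without reproving it, and your proof is exactly the standard argument behind it — Theorem~\ref{T:partition} supplies the coset partition, the general left cancellation law makes each coset a bijective image of $H$, and counting gives $|G|=k|H|$. No gaps; this is essentially the same approach as the cited source.
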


Writing $[G:H]$ as the number of left cosets of $H$ in $G$, we have the following corollary as a consequence of Theorem~\ref{T:Lagrange Lsub}.
\begin{corollary}
\label{T:number of cosets}
If $H$ is an L-subgyrogroup of a finite gyrogroup $G$, then $|G|=[G:H]|H|$.
\end{corollary}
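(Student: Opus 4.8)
The plan is to combine the partition property of left cosets from Theorem~\ref{T:partition} with a short counting argument, the crux of which is that every left coset of $H$ is equinumerous with $H$.

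First I would fix $g\in G$ and consider the left translation $\lambda_g\colon H\to g\oplus H$ given by $\lambda_g(h)=g\oplus h$. By the very definition $g\oplus H=\{g\oplus h\mid h\in H\}$, this map is surjective onto $g\oplus H$. For injectivity, I would suppose $g\oplus h_1=g\oplus h_2$ with $h_1,h_2\in H$ and invoke the general left cancellation law (Theorem~\ref{T:identities}(\ref{Id:general left cancellation law})) to conclude $h_1=h_2$. Hence $\lambda_g$ is a bijection and $|g\oplus H|=|H|$ for every $g\in G$.

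Next I would use Theorem~\ref{T:partition}: the distinct left cosets $\{g\oplus H\mid g\in G\}$ form a partition of $G$. Listing these distinct cosets as $C_1,\dots,C_{[G:H]}$ (there are $[G:H]$ of them by definition of the index), disjointness gives $|G|=\sum_{i=1}^{[G:H]}|C_i|$, and by the previous step each $|C_i|=|H|$, so $|G|=[G:H]\,|H|$.

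There is essentially no real obstacle here; the statement is a sharpening of Lagrange's Theorem for L-subgyrogroups (Theorem~\ref{T:Lagrange Lsub}), recording the exact quotient rather than mere divisibility. The only point requiring a moment's care is the bijectivity of $\lambda_g$ viewed as a map onto $g\oplus H$, which is immediate from cancellation. Alternatively one could extract the identity $|G|=[G:H]\,|H|$ directly from the proof of Theorem~\ref{T:Lagrange Lsub}, but the self-contained counting argument above is cleaner and keeps the corollary independent of how that theorem was proved.
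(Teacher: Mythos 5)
Your argument is correct: the bijectivity of $h\mapsto g\oplus h$ follows from the general left cancellation law, and combined with the partition of $G$ into left cosets (Theorem~\ref{T:partition}) this gives $|G|=[G:H]\,|H|$. This is essentially the same route the paper takes — the corollary is stated there as an immediate consequence of the coset-partition/Lagrange machinery for L-subgyrogroups, and your write-up simply makes the underlying counting explicit.
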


In the last part of this section, we turn to a combinatorial representation of a gyrogroup analogous to that of group, a Cayley graph. The following definitions are the Cayley graph version of gyrogroups.
\begin{definition}
\label{d:LRCay}
Let $G$ be a gyrogroup and let $S$ be a subset of $G$ not containing the identity element $e$. The \textit{L-Cayley graph} or \textit{left-Cayley graph of} $G$ \textit{generated by} $S$, denoted by $\LCay(G,S)$, is a directed graph whose vertices are the gyrogroup elements, and for any two vertices $u$ and $v$, $u\to v$ if $v=s\oplus u$ for some $s\in S$. We will conflate the gyrogroup elements and the vertices of graph whenever there are no confusions. 

In the same sense, the \textit{R-Cayley graph} or \textit{right-Cayley graph of} $G$ \textit{generated by} $S$, denoted $\RCay(G,S)$, is a directed graph whose the vertex set is $G$ and, for any two vertices $u$ and $v$, $u\to v$ if $v=u\oplus s$ for some $s\in S$.

If a Cayley graph has a property that $v\to u$ whenever $u\to v$, then we say that the graph is \textit{undirected}. In this case, we may draw each edge with arrows on both ends or drop the arrows entirely.
\end{definition}
If $S$ is the empty set, then each type of Cayley graphs is the union of disjoint vertices, each corresponding to an element of that gyrogroup. We do not allow the identity element $e$ to be in $S$ to avoid the presence of loops in the graph, and from this time forward, we will assume this condition without mentioning it. Many examples of Cayley graphs of gyrogroups are given in succeeding sections.

\section{Left-Cayley graphs}
\label{sec:Transitivity_L}
We begin this section by looking at some theorems and examples regarding L-Cayley graphs of gyrogroups provided in~\cite{LAS} and discuss about the transitivity. After that, we provide some sufficient conditions for an L-Cayley graph of a gyrogroup to be transitive together with an example.
\begin{definition}
Let $S$  be subset of a gyrogroup $G$. The set $S$ is said to be \textit{symmetric} if for each element $s\in S$, $\ominus s\in S$. The \textit{left-generating set by} $S$, written $(S\rangle$, is 
\[
(S\rangle =\{s_n\oplus(\cdots\oplus(s_3\oplus(s_2\oplus s_1))\cdots)\mid s_1\ldots,s_n\in S\}.
\]
If $(S\rangle=G$, we say $S$ \textit{left-generates }$G$, or $G$ is \textit{left-generated} by $S$.  The \textit{right-generating set} is defined in a similar fashion. 
\end{definition}

Two familiar results to the Cayley graphs of groups were proven in~\cite{LAS} in the case of gyrogroups and we restate them here.

\begin{theorem}[Theorem 3.1 in \cite{LAS}]
\label{LAST31}
Let $G$ be a gyrogroup and let $S$ be a subset of $G$. Then, $\LCay(G,S)$ is undirected if and only if $S$ is symmetric.
\end{theorem}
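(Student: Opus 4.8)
The plan is to prove the biconditional by unwinding the definition of an undirected graph directly in terms of the edge relation of $\LCay(G,S)$, and then using the cancellation laws from Theorem~\ref{T:identities} to translate the condition into a statement about $S$ being symmetric.

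For the forward direction, I would assume that $\LCay(G,S)$ is undirected and show $S$ is symmetric. Take any $s\in S$; I need to produce $\ominus s\in S$. The natural move is to look at the edge $e\to s$, which exists because $s = s\oplus e$. Since the graph is undirected, there is an edge $s\to e$, so $e = t\oplus s$ for some $t\in S$. By the left cancellation structure, $e = t\oplus s$ forces $t = \ominus s$ (uniqueness of inverses, part~2 of Definition~\ref{gyrogroup}), hence $\ominus s\in S$. Since $s$ was arbitrary, $S$ is symmetric.

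For the reverse direction, assume $S$ is symmetric and take any edge $u\to v$, so $v = s\oplus u$ for some $s\in S$. I want an edge $v\to u$, i.e., I need $u = t\oplus v$ for some $t\in S$. The candidate is $t = \ominus s$, which lies in $S$ by symmetry; then $\ominus s\oplus v = \ominus s\oplus(s\oplus u) = u$ by the left cancellation law (part~\ref{Id:left cancellation law} of Theorem~\ref{T:identities}). So $v\to u$ is an edge, and $\LCay(G,S)$ is undirected.

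This argument is essentially routine, so I do not anticipate a genuine obstacle; the only point requiring a little care is making sure the "edge" and "undirected" definitions are applied in the right direction (the edge condition $v = s\oplus u$ places $s$ on the left, matching the left cancellation law exactly), and handling the degenerate possibility $u = v$ — but this cannot arise for $u\to v$ with $s\in S$ since $e\notin S$, and even if one allowed it the argument is unaffected. The whole proof hinges on the left cancellation law and uniqueness of inverses, both available from the stated background.
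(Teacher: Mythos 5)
Your proof is correct and is the standard argument one expects here; note that this paper only quotes the result from \cite{LAS} without reproducing a proof, so there is nothing to diverge from, and your use of left cancellation in both directions is exactly the right mechanism. The one step to phrase carefully is $e=t\oplus s\Rightarrow t=\ominus s$: the bare uniqueness clause in the definition of a gyrogroup concerns the two-sided inverse, so you should justify this via cancellation (e.g.\ $t\oplus s=t\oplus(\ominus t)$ and the general left cancellation law give $s=\ominus t$, hence $t=\ominus s$), which is essentially what you indicated.
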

\begin{theorem}[Theorem 3.3 in \cite{LAS}]
Let $G$ be a gyrogroup and let $S$ be a symmetric subset of $G$. Then, $\LCay(G,S)$ is connected if and only if $S$ left-generates $G$.
\end{theorem}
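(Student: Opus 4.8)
The plan is to open by invoking Theorem~\ref{LAST31}: since $S$ is symmetric, $\LCay(G,S)$ is undirected, so ``connected'' may be read in the ordinary graph-theoretic sense and, crucially, each edge may be traversed in either direction. The organizing idea is that a directed edge $u\to v$ records an equation $v = s\oplus u$ with $s\in S$, so following a directed path amounts to prepending generators on the left --- exactly the shape of the generic element $s_n\oplus(\cdots\oplus(s_2\oplus s_1)\cdots)$ of $(S\rangle$. In fact one can package both implications by showing that the connected component of $e$ in $\LCay(G,S)$ is precisely the set $(S\rangle$; then $\LCay(G,S)$ is connected iff that component is all of $G$ iff $(S\rangle = G$. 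Throughout I would fix the notation $p_0 = e$ and, given $s_1,\dots,s_n\in S$, set $p_k = s_k\oplus p_{k-1}$, so that $p_n$ is that generic element.

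For the direction ``$S$ left-generates $G$ implies $\LCay(G,S)$ is connected,'' it suffices to connect every vertex to $e$. Given $g\in G$, write $g = p_n$ with each $s_k\in S$; then $p_k = s_k\oplus p_{k-1}$ exhibits a directed edge $p_{k-1}\to p_k$ for each $k$ (and $e = p_0 \to p_1$, since $p_1 = s_1\oplus e = s_1$). Hence $e = p_0, p_1,\dots, p_n = g$ is a walk in $\LCay(G,S)$, so $g$ lies in the component of $e$; as $g$ is arbitrary, the graph is connected. (For $g = e$ one uses $e = \ominus s\oplus s\in(S\rangle$ for any $s\in S$; the degenerate case $S = \emptyset$ is trivial.)

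For the converse, suppose $\LCay(G,S)$ is connected and fix $g\in G$. Choose a path $e = v_0, v_1, \dots, v_m = g$. For each $i$, the edge joining $v_{i-1}$ and $v_i$ means either $v_i = s\oplus v_{i-1}$ or $v_{i-1} = s\oplus v_i$ for some $s\in S$; in the second case the left cancellation law (Theorem~\ref{T:identities}, item~\ref{Id:left cancellation law}) gives $v_i = \ominus s\oplus v_{i-1}$, and $\ominus s\in S$ because $S$ is symmetric. So in all cases $v_i = t_i\oplus v_{i-1}$ for some $t_i\in S$. A straightforward induction on $i$ then shows $v_i\in(S\rangle$: the base case is $v_1 = t_1\oplus e = t_1\in S$, and if $v_{i-1} = s_{i-1}\oplus(\cdots\oplus s_1)\in(S\rangle$, then $v_i = t_i\oplus v_{i-1}$ is again a left-nested product of elements of $S$. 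Therefore $g = v_m\in(S\rangle$, and since $g$ was arbitrary and $(S\rangle\subseteq G$ trivially, $(S\rangle = G$.

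I expect the only genuinely delicate point to lie in the converse: a path in an undirected graph need not respect the orientation of its edges, so one must rewrite an edge $v_{i-1} = s\oplus v_i$ as $v_i = (\ominus s)\oplus v_{i-1}$ --- this is exactly where symmetry of $S$ together with the left cancellation law is used, and it mirrors the classical proof for groups. By contrast, the non-associativity of $\oplus$ causes no trouble here: $(S\rangle$ is defined via left-nested products, which are assembled one factor at a time from the left, matching precisely the way consecutive edges of an L-Cayley graph compose, so no reassociation is ever required. The remaining bookkeeping (the identity element, the empty generating set) is routine.
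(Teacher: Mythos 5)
Your proof is correct; note that the paper does not prove this statement itself but imports it as Theorem~3.3 of \cite{LAS}, and your argument --- undirectedness via Theorem~\ref{LAST31}, the forward direction by turning a left-nested product $s_n\oplus(\cdots\oplus(s_2\oplus s_1)\cdots)$ into a walk $e=p_0,p_1,\dots,p_n$, and the converse by reorienting edges with symmetry of $S$ plus the left cancellation law --- is exactly the standard approach one expects for that cited proof. The only cosmetic point is that $e\in(S\rangle$ (hence ``component of $e$ equals $(S\rangle$'') needs $S\neq\emptyset$, via $e=\ominus s\oplus s$, which you already flag.
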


In the following example, we present some L-Cayley graphs of a gyrogroup given in~\cite{LAS} and note some facts that make the transitivity of L-Cayley graphs of gyrogroups different than that of groups.

\begin{Ex}
\label{E:G8}
Let $G_8=\{0,1,2,3,4,5,6,7\}$ be a gyrogroup defined by the addition and gyration tables shown in Table~\ref{Ta:G8}. It was exhibited in~\cite{LAS} that $G$ has both transitive and non-transitive L-Cayley graphs. The subset $\{1,3\}$ is a left-generating set and symmetric. So, the L-Cayley graph $\LCay(G_{8},\{1,3\})$ is connected and undirected. It is a cycle, hence transitive, see Figure~\ref{F:G8S13andG8S123}. Whereas, 
the L-Cayley graph $\LCay(G_{8},\{1,2,3\})$ shown in the same figure is not transitive. We note here that some right additions by an element of $G$ is not an automorphism on an L-Cayley graph. For example, the right addition by $1$ on $\LCay(G_{8},\{1,3\})$ maps two adjacent vertices $4$ and $5$ to non-adjacent vertices $2$ and $4$, respectively. This phenomenon is different from that of Cayley graphs of groups where the right additions are automorphisms on the graphs. The reason is the occurrence of gyration: suppose $v=s\oplus u$, i.e., $u$ and $v$ are adjacent, then adding $g$ on the right to both sides gives $v\oplus g=(s\oplus u)\oplus g=s\oplus(u\oplus\gyr[ u,s](g))$, and we lose adjacency. This observation gives rise to Theorem~\ref{T:Ltransitive}.
\qed

\begin{figure}[ht]
\begin{center}
\includegraphics[height=3cm]{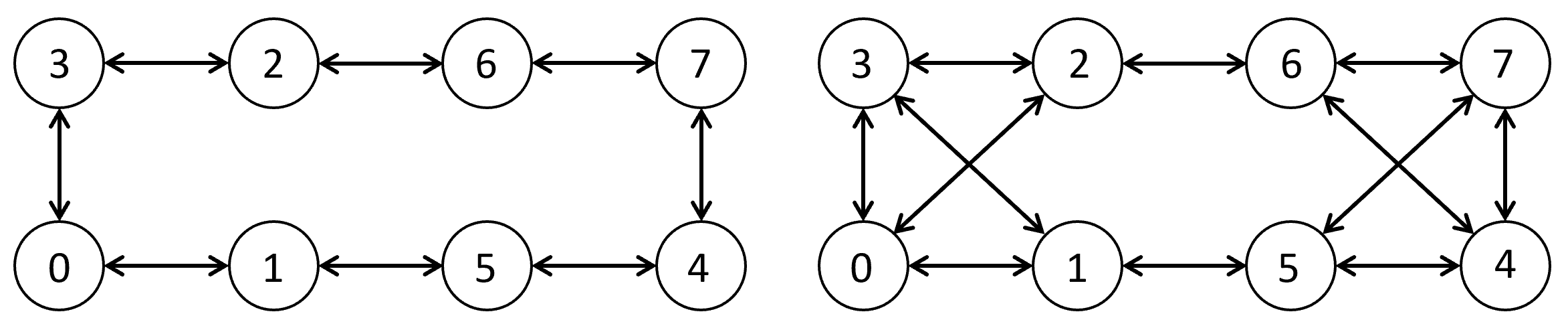} \caption{(Left) the L-Cayley graph $\LCay(G_{8},\{1,3\})$. (Right) the L-Cayley graph $\LCay(G_8,\{1,2,3\})$. } 
\label{F:G8S13andG8S123}
\end{center}
\end{figure}

\begin{table}
\begin{center}
\begin{tabular}{|c|cccccccc|}\hline
$\oplus$\ & 0 & 1 & 2& 3 & 4 & 5 & 6 & 7 \\\hline
0 & 0 & 1 & 2 & 3 & 4 & 5 & 6 & 7 \\
1 & 1 & 0 & 3 & 2 & 5 & 4 & 7 & 6 \\
2 & 2 & 3 & 0 & 1 & 6 & 7 & 4 & 5 \\
3 & 3 & 5 & 6 & 0 & 7 & 1 & 2 & 4 \\
4 & 4 & 2 & 1 & 7 & 0 & 6 & 5 & 3 \\
5 & 5 & 4 & 7 & 6 & 1 & 0 & 3 & 2 \\
6 & 6 & 7 & 4 & 5 & 2 & 3 & 0 & 1 \\
7 & 7 & 6 & 5 & 4 & 3 & 2 & 1 & 0 \\\hline
\end{tabular}\hskip1cm
\begin{tabular}{|c|cccccccc|}\hline
$\gyr$\ & 0 & 1 & 2& 3 & 4 & 5 & 6 & 7 \\\hline
0 & $I$ & $I$ & $I$ & $I$ & $I$ & $I$ & $I$ & $I$ \\
1 & $I$ & $I$ & $A$ & $A$ & $A$ & $A$ & $I$ & $I$ \\
2 & $I$ & $A$ & $I$ & $A$ & $A$ & $I$ & $A$ & $I$ \\
3 & $I$ & $A$ & $A$ & $I$ & $I$ & $A$ & $A$ & $I$ \\
4 & $I$ & $A$ & $A$ & $I$ & $I$ & $A$ & $A$ & $I$ \\
5 & $I$ & $A$ & $I$ & $A$ & $A$ & $I$ & $A$ & $I$ \\
6 & $I$ & $I$ & $A$ & $A$ & $A$ & $A$ & $I$ & $I$ \\
7 & $I$ & $I$ & $I$ & $I$ & $I$ & $I$ & $I$ & $I$ \\\hline
\end{tabular}
\caption{The gyroaddition table (left) and the gyration table (right) for $G_{8}=\{0,1,2,3,4,5,6,7\}$. The gyroautomorphism $A=(1~6)(2~5)$.}
\label{Ta:G8}
\end{center}
\end{table}
\end{Ex}

Having see Example~\ref{E:G8}, one may ask when $\LCay(G,S)$ is transitive. Some simple conditions are when $S$ is empty or $S$ contains a single order-2 element as proved in the following proposition. 
\begin{proposition}
Let $G$ be a gyrogroup and let $s$ be an element of $G$ such that $s=\ominus s$. Then the L-Cayley graph $\LCay(G,\{s\})$ is transitive.
\end{proposition}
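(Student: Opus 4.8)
The plan is to show that, as a graph, $\LCay(G,\{s\})$ is nothing more than a perfect matching on the vertex set $G$, and then to invoke the elementary fact that any disjoint union of two‑vertex edges is vertex‑transitive. The reason we go this combinatorial route rather than imitating the group case is explained at the end.

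First I would pin down the adjacency structure. By definition $u\to v$ holds if and only if $v=s\oplus u$, so each vertex $u$ has a unique out‑neighbor, namely $s\oplus u$. Dually, $w\to u$ forces $s\oplus w=u$, and the left cancellation law (Theorem~\ref{T:identities}(\ref{Id:left cancellation law})) together with $\ominus s=s$ gives $w=\ominus s\oplus u=s\oplus u$; hence $u$ has a unique in‑neighbor, and it is again $s\oplus u$. (Consistently, $\{s\}$ is symmetric, so Theorem~\ref{LAST31} already guarantees $\LCay(G,\{s\})$ is undirected.) Next I would rule out fixed points: the equation $x\oplus u=u$ has at most one solution (Section~\ref{sec:Background}) and $x=e$ is one, so $s\oplus u=u$ would force $s=e$, which is excluded by the standing assumption $e\notin S$. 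Thus $s\oplus u\neq u$ for every $u$, while $\ominus s=s$ and the left cancellation law give $s\oplus(s\oplus u)=u$; so $u\mapsto s\oplus u$ is a fixed‑point‑free involution of $G$, its orbits are the two‑element sets $\{u,\,s\oplus u\}$, and these orbits are exactly the edges of $\LCay(G,\{s\})$. Hence the graph is a disjoint union of $\tfrac{|G|}{2}$ copies of the complete graph on two vertices.

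It then remains to observe that such a graph is vertex‑transitive, which is routine. Given vertices $u$ and $v$, lying on the edges $\{u,s\oplus u\}$ and $\{v,s\oplus v\}$, I would define the permutation $\phi$ of $G$ that sends $u\mapsto v$, $s\oplus u\mapsto s\oplus v$, $v\mapsto u$, $s\oplus v\mapsto s\oplus u$, and fixes every remaining vertex (when the two edges coincide this is just the transposition of their two endpoints, or the identity). One checks immediately that $\phi$ carries edges to edges and respects the two opposite arcs on each edge, so $\phi\in\Aut\bigl(\LCay(G,\{s\})\bigr)$ and $\phi(u)=v$; hence $\LCay(G,\{s\})$ is transitive.

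There is no serious obstacle here; the only point worth flagging is \emph{why} the argument must pass through this combinatorial picture rather than the familiar group‑theoretic one. As Example~\ref{E:G8} shows, neither left nor right addition by a fixed element of $G$ need be an automorphism of an L‑Cayley graph of a gyrogroup, so one cannot simply translate $u$ to $v$ algebraically; instead one exploits that the graph has such a rigid (1‑regular) shape. The remaining care is purely the bookkeeping in the adjacency analysis above, together with the trivial remark that if one were to drop the hypothesis $s\neq e$, the graph would have no edges and be transitive for the same vacuous reason.
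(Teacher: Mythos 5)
Your proof is correct and takes essentially the same route as the paper: both arguments identify $\LCay(G,\{s\})$ as an undirected graph in which every vertex has degree one, i.e.\ a disjoint union of $|G|/2$ edges, and conclude transitivity from that shape rather than from any algebraic translation map. The only difference is minor: the paper gets evenness of $|G|$ by citing Lagrange's theorem for gyrogroups and leaves the transitivity of a perfect matching implicit, whereas you derive the matching structure directly from the fixed-point-free involution $u\mapsto s\oplus u$ and write down the swapping automorphism explicitly, making the argument a bit more self-contained.
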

\begin{proof}
Note that, from the assumption, $|G|$ is even by Lagrange's theorem for gyrogroups proved in~\cite[Theorem 5.7]{suksumran2014}. Since $\{s\}$ is symmetric and contains only one element, the Cayley graph is undirected by Theorem \ref{LAST31} and each vertex  has degree $1$. Hence the graph is a disjoint union of $|G|/2$ edges, which means that it is transitive.
\end{proof}
Continuing from the discussion at the end of Example~\ref{E:G8}, we provide the main theorem of this section.
\begin{theorem}
\label{T:Ltransitive} Let $(G,\oplus)$  be a gyrogroup and let $S$ be an symmetric subset of $G$. If $\gyr[g,s]$ is the identity map for all $g\in G$ and $s\in S$, then $\LCay(G,S)$ is transitive.
\end{theorem}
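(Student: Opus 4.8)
The plan is to show that the left translation maps furnish enough automorphisms of $\LCay(G,S)$ to act transitively on the vertex set, exactly as in the classical group case, with the gyration hypothesis being precisely what is needed to repair the step that fails for general gyrogroups. For a fixed $a\in G$, define $L_a\colon G\to G$ by $L_a(x)=a\oplus x$. By the general left cancellation law (Theorem~\ref{T:identities}\eqref{Id:general left cancellation law}) together with the fact that $x\mapsto a\oplus x$ is onto (solve $a\oplus x=y$ via $x=\ominus a\oplus y$ using the left cancellation law), each $L_a$ is a bijection of $G$. Since the vertex set of $\LCay(G,S)$ is $G$ itself, and for any $a,b\in G$ we have $L_{b\ominus a}\big(a\big) \ne b$ in general but $L_c(a)=b$ for $c$ solving $c\oplus a = b$ — which exists and is unique by right cancellation (Theorem~\ref{T:identities}\eqref{Id:right cancellation law II}, giving $c = b\boxminus a$) — the collection $\{L_a : a\in G\}$ already acts transitively on vertices. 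So the whole content is to verify that each $L_a$ is a graph automorphism, i.e. preserves the edge relation in both directions.

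The key computation is the following. Suppose $u\to v$ in $\LCay(G,S)$, so $v=s\oplus u$ for some $s\in S$. I must show $L_a(u)\to L_a(v)$, i.e. that $a\oplus v = s'\oplus(a\oplus u)$ for some $s'\in S$. Compute
\[
a\oplus v = a\oplus(s\oplus u) = (a\oplus s)\oplus \gyr[a,s](u)
\]
by the left gyroassociative law. Here is where the hypothesis enters: since $\gyr[a,s]$ is the identity map (as $a\in G$, $s\in S$), this is simply $(a\oplus s)\oplus u$. That is not yet of the form $s'\oplus(a\oplus u)$, so I need a second application of the hypothesis in the other order. Using the right gyroassociative law (Theorem~\ref{T:identities}\eqref{Id:right gyroassociative law}),
\[
(a\oplus s)\oplus u = a\oplus\big(s\oplus\gyr[s,a](u)\big),
\]
and again $\gyr[s,a]$ is the identity because $s\in S$ and $a\in G$ — wait, this needs $\gyr[s,a]=\mathrm{id}$, which is the hypothesis with the roles as stated only if we know $\gyr[g,s]=\mathrm{id}$ for all $g\in G,s\in S$; note $\gyr[s,a]$ has $s$ in the \emph{first} slot, so I should instead argue directly. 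The cleaner route: from $a\oplus v=(a\oplus s)\oplus u$ and the goal $a\oplus v = s'\oplus(a\oplus u)$, I want $s'$ with $s'\oplus(a\oplus u)=(a\oplus s)\oplus u$. I claim $s'=s$ works: by the left gyroassociative law applied to $s\oplus(a\oplus u) = (s\oplus a)\oplus\gyr[s,a](u)$, and separately $(a\oplus s)\oplus u$; so I genuinely need a gyration with $s$ first. The fix is to strengthen the observation using the left loop property and symmetry of $S$, or simply to note that the hypothesis as used on $\gyr[a,s]=\mathrm{id}$ for all $a$ forces, via the gyrator identity and standard identities, that $\gyr[s,a]=\mathrm{id}$ as well when $s$ has finite order — but the slickest is: the hypothesis gives $a\oplus(s\oplus x)=(a\oplus s)\oplus x$ for all $a,x$; apply this with $a$ replaced by $s$ and $x$ replaced by $a\oplus u$ after first rewriting $(a\oplus s)\oplus u$. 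Concretely, set $b=a\oplus s$; then $b\oplus u$ and I want to "pull $s$ out on the left of $a\oplus u$." Since $\gyr[a,s]=\mathrm{id}$ for all $a$, in particular the map $x\mapsto s\oplus x$ satisfies $a\oplus(s\oplus x)=(a\oplus s)\oplus x$; reading this as an identity in $a$ and $x$ and substituting $x=u$ gives back what we have, so to finish I instead invoke that $\gyr[\cdot,\cdot]$ being trivial on these pairs makes the relevant partial operation associative enough: $s'=s$ and $L_a(u)\to L_a(v)$ follows once I also record $\gyr[s,a]=\mathrm{id}$, which I will deduce from the left loop property $\gyr[s\oplus a,a]=\gyr[s,a]$ combined with the hypothesis, or by citing that in this setting $G$ is in fact a group on the subgyrogroup generated by $S$.

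The main obstacle, then, is exactly this asymmetry: the hypothesis is stated as $\gyr[g,s]=\mathrm{id}$ (generator in the \emph{second} argument), while the natural edge-preservation computation also throws up gyrations with the generator in the \emph{first} argument. I expect the author resolves it by first establishing, as a lemma or inline step, that the hypothesis also yields $\gyr[s,g]=\mathrm{id}$ — plausibly via the identity $\gyr[a,b]$ and its inverse $\gyr^{-1}[a,b]=\gyr[b\oplus\text{something}]$, or more likely by observing that since $S$ is symmetric and left-generates a subgyrogroup $H$ on which all gyrations (from all of $G$) restrict to the identity, $H$ is an $L$-subgyrogroup that happens to be a group, so edges within each left coset of $H$ behave as in an ordinary Cayley graph of a group. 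Once two-sided triviality of the relevant gyrations is in hand, each $L_a$ preserves and reflects edges by two clean applications of the (left and right) gyroassociative laws, the bijectivity of $L_a$ is Theorem~\ref{T:identities}, and transitivity on vertices is immediate from right cancellation — completing the proof. I would also double-check the "reflects edges" direction, i.e. that $L_a(u)\to L_a(v)$ implies $u\to v$, but this follows by applying the already-established forward direction to $L_{\ominus a}$ together with the left cancellation law $L_{\ominus a}\circ L_a=\mathrm{id}$.
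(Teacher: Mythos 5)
Your proposal has a genuine gap, and it is exactly the one you flag but never resolve. You try to realize transitivity through the \emph{left} translations $L_a\colon x\mapsto a\oplus x$, but these are the wrong maps for a left Cayley graph: already for groups, left multiplication preserves the edge relation $u\to s\oplus u$ only when $S$ is closed under conjugation, so your fallback remark that ``$\langle S\rangle$ is in fact a group'' cannot rescue the argument. Concretely, your computation gives $a\oplus(s\oplus u)=(a\oplus s)\oplus u$ by the hypothesis, and even if you also grant $\gyr[s,a]=\mathrm{id}$ (which does follow from the gyroautomorphism inversion law $\gyr[b,a]=\gyr[a,b]^{-1}$, an identity not listed in Theorem~\ref{T:identities}, so it would need a citation), the requirement $s'\oplus(a\oplus u)=(a\oplus s)\oplus u$ reduces, via $s'\oplus(a\oplus u)=(s'\oplus a)\oplus u$ and right cancellation, to finding $s'\in S$ with $s'\oplus a=a\oplus s$. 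That is a conjugation-type closure condition on $S$ which is not among the hypotheses, so the edge-preservation step never closes; your proof ends with the obstruction acknowledged but not removed.

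The paper's proof avoids the asymmetry entirely by using \emph{right} translations $\phi\colon x\mapsto x\oplus g$, which is where the hypothesis $\gyr[g,s]=\mathrm{id}$ (generator in the \emph{second} slot) is actually aimed. If $z\to w$, i.e.\ $w=s\oplus z$ with $s\in S$, then the right gyroassociative law gives $w\oplus g=(s\oplus z)\oplus g=s\oplus\bigl(z\oplus\gyr[z,s](g)\bigr)=s\oplus(z\oplus g)$, because $\gyr[z,s]$ has an element of $G$ in the first argument and an element of $S$ in the second, exactly the form covered by the hypothesis. Hence each right translation preserves adjacency (and is a bijection), and given vertices $u,v$ one takes $g=\ominus u\oplus v$ so that $\phi(u)=u\oplus(\ominus u\oplus v)=v$, proving transitivity. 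So the missing idea in your attempt is simply the choice of translation side: right additions, not left ones, are the automorphisms that the gyration hypothesis makes work.
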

\begin{proof}
The idea of this theorem is that the condition on $\gyr[g,s]$ makes the right additions by any element of $G$ automorphisms on $\LCay(G,S)$. First, we note that since $S$ is symmetric, the Cayley graph is undirected. Now, let $u$ and $v$ be two vertices in $\LCay(G,s)$, i.e., two elements in $G$. Then  $v=u\oplus g$ for some $g\in G$. Suppose $w$ and $z$ are adjacent in $\LCay(G,S)$, that is, $w=s\oplus z$, for some $s\in S$. Adding $g$ on the right gives 
\[
w\oplus g=(s\oplus z)\oplus g=s\oplus (z\oplus\gyr[ z,s](g))=s\oplus(z\oplus g),
\]
which implies that $w\oplus g$ and $z\oplus g$ are adjacent. Hence the map $\phi:\LCay(G,S)\to\LCay(G,S)$ sending $x$ to $x\oplus g$ is an automorphism which maps $u$ to $v$. Therefore $\LCay(G,S)$ is transitive. \end{proof}
The converse of Theorem~\ref{T:Ltransitive} is not true as discussed in Example~\ref{E:G8} that some right additions of an element on a transitive Cayley graph ($\LCay(G_{8},\{1,3\})$) are not automorphisms on the graph. Below is an example for the gyrogroup $G_{16}$, a gyrogroup with $16$ elements.
\begin{Ex}
\label{E:G16}
Introduced in~\cite[p.41]{Ungar1}, the gyrogroup $G_{16}$ (called $K_{16}$ in the paper) has its addition and gyration tables as shown in Tables~\ref{Ta:G16addition} and~\ref{Ta:G16gyration}, respectively. Let $S=\{1,2,3\}$. From the gyration table, $\gyr[ g,s]$ is the identity map for all $g\in G$ and $s\in S$. By Theorem~\ref{T:Ltransitive}, the L-Cayley graph $\LCay(G_{16}, S)$ is transitive; it is a disjoint union of four copies of a complete graph with four vertices, as shown in Figure~\ref{F:G16S123}. Observe, for example, that the right addition by $1$ acts on $\LCay(G_{16},S)$ by flipping each copy of the complete graph exchanging the top and bottom pairs of vertices. Picking two vertices, say $1$ and $7=1\oplus 6$, an automorphism $\phi$ on the L-Cayley graph sending $1$ to $7$ is the right addition by $6$.\qed

\begin{figure}[ht]
\begin{center}
\includegraphics[height=5cm]{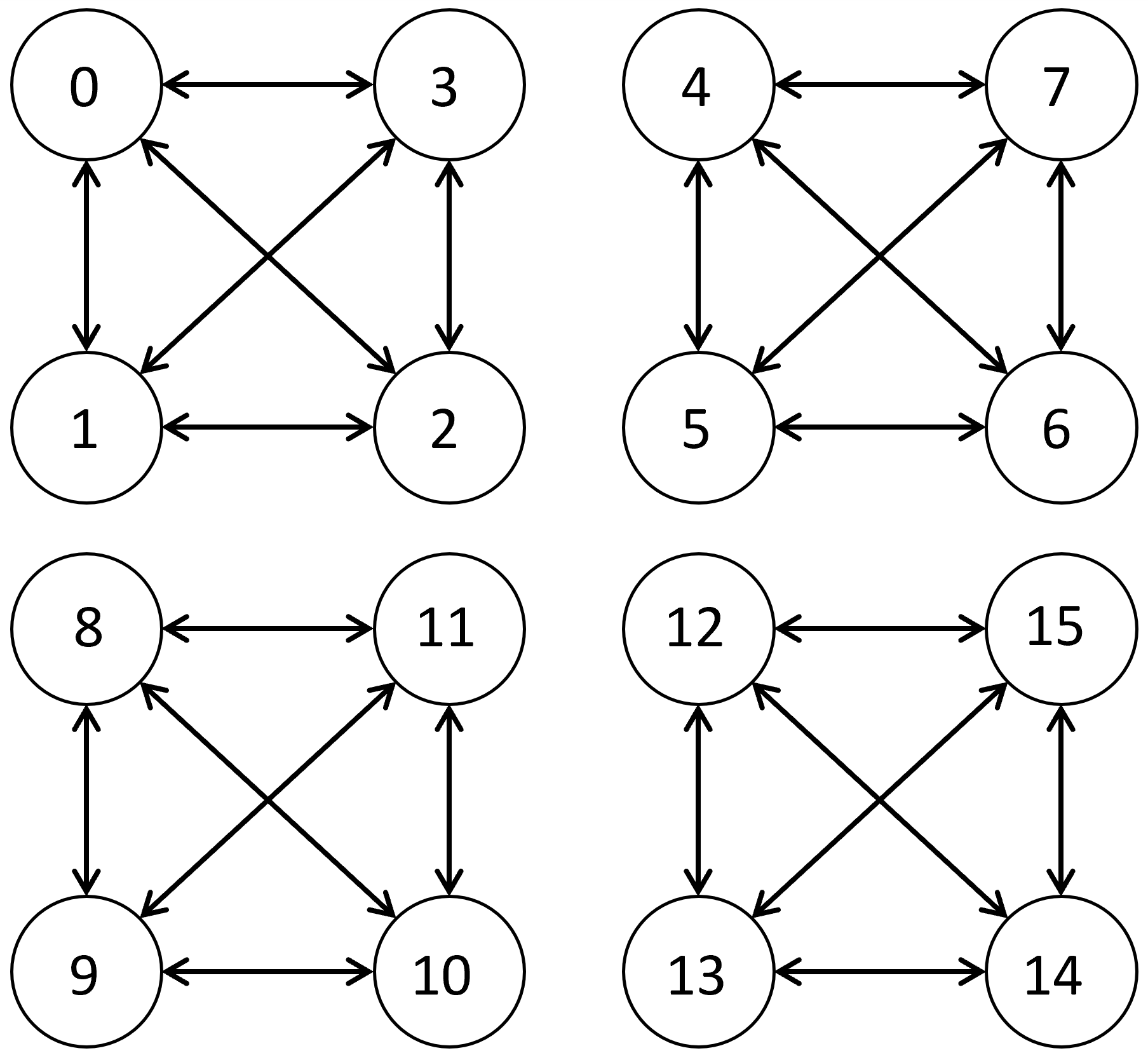} \caption{The L-Cayley graph $\LCay(G_{16},\{1,2,3\})$.} 
\label{F:G16S123}
\end{center}
\end{figure}
\end{Ex}

\begin{table}
\begin{center}
\begin{tabular}{|c|cccccccccccccccc|}
\hline $\oplus$ & 0 & 1 & 2 & 3 & 4 & 5 & 6 & 7 & 8 & 9 & 10 & 11 &
12 & 13 & 14 & 15 \\ \hline 0 & 0 & 1 & 2 & 3 & 4 & 5 & 6 & 7 & 8 &
9 & 10 &
11 & 12 & 13 & 14 & 15 \\
1 & 1 & 0 & 3 & 2 & 5 & 4 & 7 & 6 & 9 & 8 & 11 & 10 &
13 & 12 & 15 & 14 \\
2 & 2 & 3 & 1 & 0 & 6 & 7 & 5 & 4 & 11 & 10 & 8 & 9 &
15 & 14 & 12 & 13 \\
3 & 3 & 2 & 0 & 1 & 7 & 6 & 4 & 5 & 10 & 11 & 9 & 8 &
14 & 15 & 13 & 12 \\
4 & 4 & 5 & 6 & 7 & 3 & 2 & 0 & 1 & 15 & 14 & 12 & 13 &
9 & 8 & 11 & 10 \\
5 & 5 & 4 & 7 & 6 & 2 & 3 & 1 & 0 & 14 & 15 & 13 & 12 &
8 & 9 & 10 & 11 \\
6 & 6 & 7 & 5 & 4 & 0 & 1 & 2 & 3 & 13 & 12 & 15 & 14 &
10 & 11 & 9 & 8 \\
7 & 7 & 6 & 4 & 5 & 1 & 0 & 3 & 2 & 12 & 13 & 14 & 15 &
11 & 10 & 8 & 9 \\
8 & 8 & 9 & 10 & 11 & 12 & 13 & 14 & 15 & 0 & 1 & 2 & 3 &
4 & 5 & 6 & 7 \\
9 & 9 & 8 & 11 & 10 & 13 & 12 & 15 & 14 & 1 & 0 & 3 & 2 &
5 & 4 & 7 & 6 \\
10 & 10 & 11 & 9 & 8 & 14 & 15 & 13 & 12 & 3 & 2 & 0 & 1 &
7 & 6 & 4 & 5 \\
11 & 11 & 10 & 8 & 9 & 15 & 14 & 12 & 13 & 2 & 3 & 1 & 0 &
6 & 7 & 5 & 4 \\
12 & 12 & 13 & 14 & 15 & 11 & 10 & 8 & 9 & 6 & 7 & 5 & 4 &
0 & 1 & 2 & 3 \\
13 & 13 & 12 & 15 & 14 & 10 & 11 & 9 & 8 & 7 & 6 & 4 & 5 &
1 & 0 & 3 & 2 \\
14 & 14 & 15 & 13 & 12 & 8 & 9 & 10 & 11 & 4 & 5 & 6 & 7 &
3 & 2 & 0 & 1 \\
15 & 15 & 14 & 12 & 13 & 9 & 8 & 11 & 10 & 5 & 4 & 7 & 6 & 2 & 3 &
1& 0\\
\hline
\end{tabular}
\caption{The addition table of the gyrogroup $G_{16}$.}
\label{Ta:G16addition}
\end{center}
\end{table}

\begin{table}
\begin{center}
\begin{tabular}{|c|cccccccccccccccc|}
\hline $\textrm{gyr}$ & 0 & 1 & 2 & 3 & 4 & 5 & 6 & 7 & 8 & 9 & 10 &
11 & 12 & 13 & 14 & 15 \\ \hline 0 & $I$ & $I$ & $I$ & $I$ & $I$ &
$I$ & $I$ & $I$ & $I$ &
$I$ & $I$ & $I$ & $I$ & $I$ & $I$ & $I$ \\
1 & $I$ & $I$ & $I$ & $I$ & $I$ & $I$ & $I$ & $I$ & $I$ &
$I$ & $I$ & $I$ & $I$ & $I$ & $I$ & $I$ \\
2 & $I$ & $I$ & $I$ & $I$ & $I$ & $I$ & $I$ & $I$ & $I$ &
$I$ & $I$ & $I$ & $I$ & $I$ & $I$ & $I$ \\
3 & $I$ & $I$ & $I$ & $I$ & $I$ & $I$ & $I$ & $I$ & $I$ &
$I$ & $I$ & $I$ & $I$ & $I$ & $I$ & $I$ \\
4 & $I$ & $I$ & $I$ & $I$ & $I$ & $I$ & $I$ & $I$ & $A$ &
$A$ & $A$ & $A$ & $A$ & $A$ & $A$ & $A$ \\
5 & $I$ & $I$ & $I$ & $I$ & $I$ & $I$ & $I$ & $I$ & $A$ &
$A$ & $A$ & $A$ & $A$ & $A$ & $A$ & $A$ \\
6 & $I$ & $I$ & $I$ & $I$ & $I$ & $I$ & $I$ & $I$ & $A$ &
$A$ & $A$ & $A$ & $A$ & $A$ & $A$ & $A$ \\
7 & $I$ & $I$ & $I$ & $I$ & $I$ & $I$ & $I$ & $I$ & $A$ &
$A$ & $A$ & $A$ & $A$ & $A$ & $A$ & $A$ \\
8 & $I$ & $I$ & $I$ & $I$ & $A$ & $A$ & $A$ & $A$ & $I$ &
$I$ & $I$ & $I$ & $A$ & $A$ & $A$ & $A$ \\
9 & $I$ & $I$ & $I$ & $I$ & $A$ & $A$ & $A$ & $A$ & $I$ &
$I$ & $I$ & $I$ & $A$ & $A$ & $A$ & $A$ \\
10 & $I$ & $I$ & $I$ & $I$ & $A$ & $A$ & $A$ & $A$ & $I$ &
$I$ & $I$ & $I$ & $A$ & $A$ & $A$ & $A$ \\
11 & $I$ & $I$ & $I$ & $I$ & $A$ & $A$ & $A$ & $A$ & $I$ &
$I$ & $I$ & $I$ & $A$ & $A$ & $A$ & $A$ \\
12 & $I$ & $I$ & $I$ & $I$ & $A$ & $A$ & $A$ & $A$ & $A$ &
$A$ & $A$ & $A$ & $I$ & $I$ & $I$ & $I$ \\
13 & $I$ & $I$ & $I$ & $I$ & $A$ & $A$ & $A$ & $A$ & $A$ &
$A$ & $A$ & $A$ & $I$ & $I$ & $I$ & $I$ \\
14 & $I$ & $I$ & $I$ & $I$ & $A$ & $A$ & $A$ & $A$ & $A$ &
$A$ & $A$ & $A$ & $I$ & $I$ & $I$ & $I$ \\
15 & $I$ & $I$ & $I$ & $I$ & $A$ & $A$ & $A$ & $A$ & $A$ & $A$ & $A$
& $A$ & $I$ & $I$ & $I$ & $I$\\
\hline
\end{tabular}
\caption{The gyration table of the gyrogroup $G_{16}$. The gyroautomorphism $A=\{(8~9)(10~11)(12~13)(14~15)\}$.}
\label{Ta:G16gyration}
\end{center}
\end{table}

\section{Right-Cayley graphs}
\label{sec:Transitivity_R}
In this section, we provide  sufficient and necessary conditions for an R-Cayley graph of a gyrogroup to be undirected, give a sufficient condition for the Cayley graph to be transitive, and explore the connection of the cosets of L-subgyrogroups with the connected components of R-Cayley graphs. A few examples regarding these properties are also included.
\subsection{Undirectedness}
\label{ssec:Rundirectedness}
For an L-Cayley graph $\LCay(G,S)$, it is undirected when $S$ is symmetric. This is not the case for an R- Cayley graph as shown in the following examples.
\begin{Ex}
The right Cayley graph $\RCay(G_{16},\{8\})$ is not an undirected graph as shown in Figure~\ref{F:G16S8}. Note that $\{8\}$  right-generates a subgyrogroup $\{0,8\}$ which is not an L-subgyrogroup.
\qed  

\begin{figure}[ht]
\begin{center}
\includegraphics[height=2.7cm]{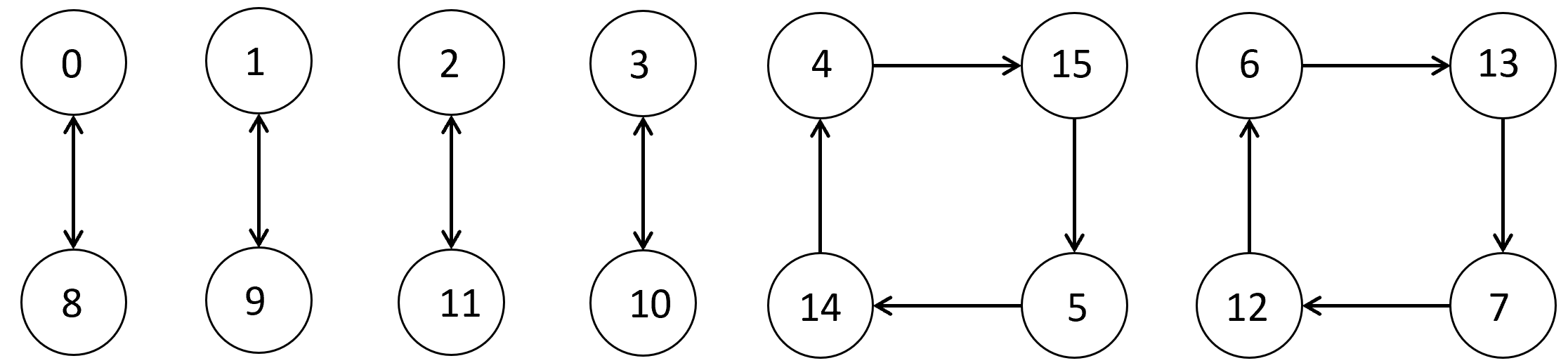} \caption{The R-Cayley graph $\RCay(G_{16},\{8\})$.} 
\label{F:G16S8}
\end{center}
\end{figure}
\end{Ex}

\begin{Ex}
The right Cayley graph $\RCay(G_{16},\{1,8\})$ is not an undirected graph as shown in Figure~\ref{F:G16S18}. Note that $\{1,8\}$ right-generates an L-subgyrogroup $\{0,1,8,9\}$.
\qed

\begin{figure}[ht]
\begin{center}
\includegraphics[height=2.7cm]{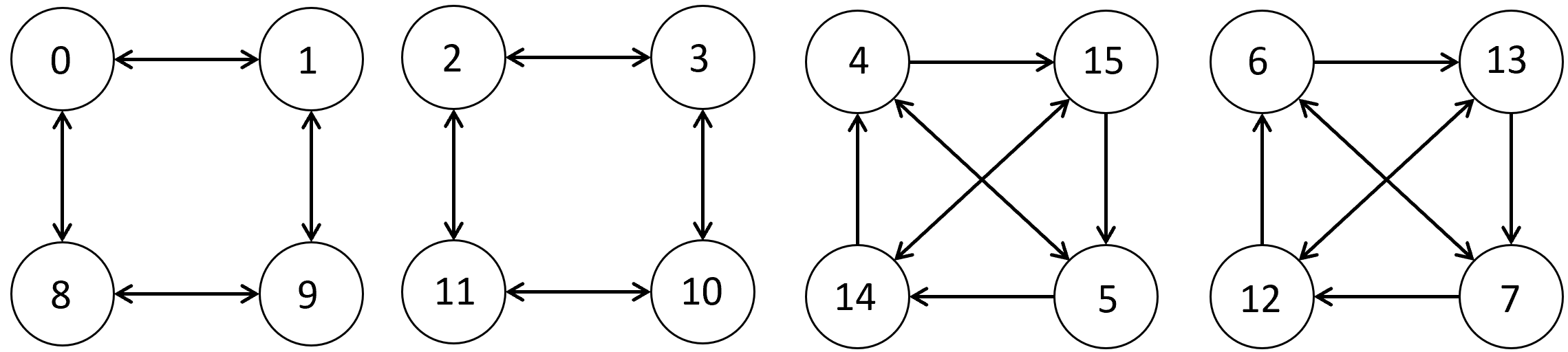} \caption{The R-Cayley graph $\RCay(G_{16},\{1,8\})$.} \label{F:G16S18}
\end{center}
\end{figure}
\end{Ex}

The following theorem gives  sufficient and necessary conditions for an L-Cayley graph of a gyrogroup to be undirected.

\begin{theorem}
\label{T:Rundirected}
Let $(G,\oplus)$  be a gyrogroup and let $S$ be a symmetric subset of $G$. If $\gyr[g,s](S)=S$ for all $s\in S$ and $g\in G$ then $\RCay(G,S)$ is undirected. Conversely, if $\RCay(G,S)$ is undirected then $\gyr[g,s]s\in S$, for all $g\in G$ and $s\in S$.
\end{theorem}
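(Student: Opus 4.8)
The plan is to first unwind the definition of ``undirected'': $\RCay(G,S)$ is undirected exactly when, for every pair of vertices with $v = u\oplus s$ for some $s\in S$, there exists $t\in S$ with $u = v\oplus t$. Both implications then reduce to combining the two gyroassociative laws (Definition~\ref{gyrogroup}(3) and item~\ref{Id:right gyroassociative law} of Theorem~\ref{T:identities}) with the identity $\gyr[a,b](\ominus c) = \ominus\gyr[a,b]c$ (item~\ref{Id:inverse through gyro}) and the general left cancellation law (item~\ref{Id:general left cancellation law}); the symmetry of $S$ is used in each direction as well.

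For the forward implication I would start from an edge $u\to v$, so $v = u\oplus s$ with $s\in S$, and propose the ``return'' generator $t := \ominus\gyr[u,s]\,s$. First one checks $t\in S$: by hypothesis $\gyr[u,s](S) = S$, hence $\gyr[u,s]\,s\in S$, and since $S$ is symmetric its negative $t$ also lies in $S$. Then the edge $v\to u$ follows from the direct computation, using item~\ref{Id:inverse through gyro} to rewrite $t = \gyr[u,s](\ominus s)$ and then the left gyroassociative law with $x=u$, $y=s$, $z=\ominus s$:
\[
v\oplus t = (u\oplus s)\oplus\gyr[u,s](\ominus s) = u\oplus\bigl(s\oplus(\ominus s)\bigr) = u\oplus e = u.
\]
Hence $v\to u$, so the graph is undirected.

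For the converse I would assume $\RCay(G,S)$ is undirected and fix $g\in G$, $s\in S$. Since $g\to g\oplus s$, undirectedness provides some $t\in S$ with $g = (g\oplus s)\oplus t$. Because $\gyr[g,s]$ is an automorphism, hence a bijection, I may write $t = \gyr[g,s]\,t'$ for a unique $t'\in G$; the left gyroassociative law then turns $(g\oplus s)\oplus\gyr[g,s]\,t'$ into $g\oplus(s\oplus t')$, so $g\oplus(s\oplus t') = g$, and general left cancellation forces $s\oplus t' = e$, i.e.\ $t' = \ominus s$. Therefore $t = \gyr[g,s](\ominus s) = \ominus\gyr[g,s]\,s$ by item~\ref{Id:inverse through gyro}, and since $t\in S$ and $S$ is symmetric we get $\gyr[g,s]\,s = \ominus t\in S$, as required.

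Neither direction is computationally deep; the points that need care are the bookkeeping of gyration subscripts (the forward argument runs cleanly with $\gyr[u,s]$, which equals $\gyr[v,s]$ here via the left loop property since $v=u\oplus s$) and the observation that the converse only yields the weaker conclusion $\gyr[g,s]\,s\in S$ rather than $\gyr[g,s](S)=S$ — so the two conditions are genuinely not equivalent, which is why the statement is phrased as an implication in each direction rather than an ``if and only if.''
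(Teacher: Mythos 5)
Your proof is correct and takes essentially the same route as the paper: your forward-direction element $t=\ominus\gyr[u,s]s$ is exactly the paper's $s'=\gyr[v,s](\ominus s)$ (equal by the left loop property), which the paper obtains via the coaddition identity $u=v\boxminus s$, and your converse is the same cancellation argument, merely started from the edge $g\to g\oplus s$ instead of the paper's $g\boxminus s\to g$.
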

\begin{proof}
Suppose there is a directed edge from  $u$ to $v$  in $\RCay(G,S)$. Then $v=u\oplus s$ for some $s\in S$. By the right cancellation law and the definition of coaddition, we have $u=v\boxminus s=v\oplus \gyr[v,s](\ominus s)=v\oplus s'$, for some $s'\in S$, which implies that there is a directed edge from $v$ to $u$. Hence $\RCay(G,S)$ is undirected.

Now suppose that $\RCay(G,S)$ is undirected. Given elements $g\in G$ and $s\in S$. Then $g\boxminus s$ is a vertex in the Cayley graph. Since $(g\boxminus s)\oplus s=g$, there is a directed edge $g\boxminus s\to g$. The undirected condition implies the existence of a directed edge $g\to g\boxminus s $, which means that $g\oplus s'=g\boxminus s=g\oplus \gyr[g,s](\ominus s)$ for some $s'\in S$. Using the general left cancellation law and Item~\ref{Id:inverse through gyro} in Theorem~\ref{T:identities}, we have $\gyr[g,s]s=\ominus s'$ which is an element in $S$ as desired.
\end{proof}

\begin{Ex}
\label{E:G16S89}
Since $\gyr[g_1,g_2](\{8,9\})=\{8,9\}$ for all $g_1,g_2\in G_{16}$, the right Cayley graph $\RCay(G_{16},\{8,9\})$ is an undirected graph as shown in Figure~\ref{F:G16S89}. Observe, for instance, that $14\to 4$ because $4=14\oplus 8$, while $4\to 14$ because $14=4\oplus 9$ but $8$ and $9$ are not inverses of one another. This is different from the left Cayley graph where a bidirected edge arises from an element of $S$ and its inverse. Note that $\{8,9\}$ right-generate an L-subgyrogroup $\{0,1,8,9\}$.
\qed

\begin{figure}[ht]
\begin{center}
\includegraphics[height=2.7cm]{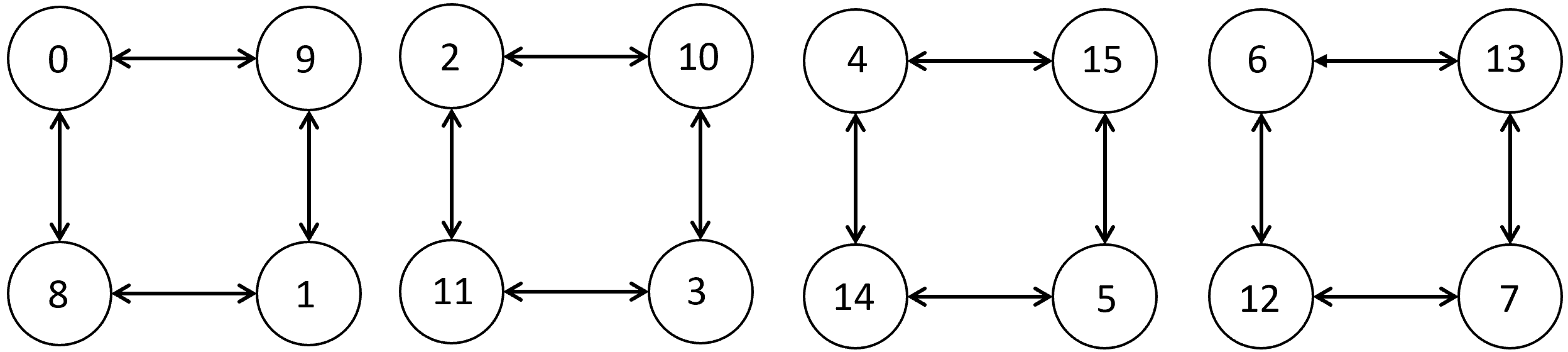} \caption{The R-Cayley graph $\RCay(G_{16},\{8,9\})$.} \label{F:G16S89}
\end{center}
\end{figure}
\end{Ex}

\subsection{Transitivity}
\label{ssec:Rtransitivity}
In this subsection, we give a sufficient condition for an R-Cayley graph to be transitive.
\begin{theorem}
\label{T:Rtransitive}
Let $(G,\oplus)$  be a gyrogroup and let $S$ be a symmetric subset of $G$ such that $\gyr[ g,g'](S)=S$ for all $g,g'\in G$. Then $\RCay(G,S)$ is transitive.
\end{theorem}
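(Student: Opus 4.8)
The plan is to follow the strategy of Theorem~\ref{T:Ltransitive}, but with the roles of left and right additions interchanged: an R-Cayley graph records right additions, so the natural candidates for graph automorphisms are the \emph{left translations} $L_a\colon G\to G$, $x\mapsto a\oplus x$, and the hypothesis $\gyr[g,g'](S)=S$ will absorb the gyration term that non-associativity produces. First I would record the standard fact that each $L_a$ is a bijection of $G$: by the left cancellation law (Item~\ref{Id:left cancellation law} of Theorem~\ref{T:identities}) we have $L_{\ominus a}\circ L_a=\mathrm{id}_G$, and symmetrically $L_a\circ L_{\ominus a}=\mathrm{id}_G$ using $\ominus(\ominus a)=a$, so $L_a$ is bijective with $L_a^{-1}=L_{\ominus a}$.

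Next I would verify that every $L_a$ is an automorphism of the graph $\RCay(G,S)$. Suppose $u\to v$, that is, $v=u\oplus s$ for some $s\in S$. Applying the left gyroassociative law,
\[
a\oplus v=a\oplus(u\oplus s)=(a\oplus u)\oplus\gyr[a,u](s),
\]
and $\gyr[a,u](s)\in\gyr[a,u](S)=S$ by hypothesis, so $L_a(u)\to L_a(v)$. Thus $L_a$ preserves directed edges for every $a\in G$; applying this also to $L_{\ominus a}=L_a^{-1}$ shows that $u\to v$ if and only if $L_a(u)\to L_a(v)$, so $L_a\in\Aut(\RCay(G,S))$. (Note that the symmetry of $S$ together with Theorem~\ref{T:Rundirected} already makes $\RCay(G,S)$ undirected, although for vertex-transitivity one only needs the automorphisms, not undirectedness.)

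Finally I would deduce transitivity. Given arbitrary vertices $u,v\in G$, set $a=v\boxminus u$. By the right cancellation law~II (Item~\ref{Id:right cancellation law II} of Theorem~\ref{T:identities}), $a\oplus u=(v\boxminus u)\oplus u=v$, so the automorphism $L_a$ sends $u$ to $v$. Since $u$ and $v$ were arbitrary, $\RCay(G,S)$ is transitive. The only genuinely load-bearing step is the edge-preservation computation in the second paragraph: it is precisely the identity $\gyr[a,u](S)=S$ that keeps $\gyr[a,u](s)$ inside $S$, and without it left translation fails to be a graph map — the same phenomenon observed for right translations on L-Cayley graphs in Example~\ref{E:G8}. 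Everything else is bookkeeping with the cancellation laws.
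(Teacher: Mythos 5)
Your proposal is correct and follows essentially the same route as the paper: left translations are shown to preserve edges of $\RCay(G,S)$ because the hypothesis $\gyr[g,g'](S)=S$ absorbs the gyration produced by the left gyroassociative law, and transitivity follows by translating $u$ to $v$. You are slightly more explicit than the paper in checking bijectivity of $L_a$, the edge-preservation of $L_a^{-1}$, and the choice $a=v\boxminus u$, but these are refinements of the same argument rather than a different approach.
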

\begin{proof}
We note that, by Theorem~\ref{T:Rundirected}, $\RCay(G,S)$ is undirected. A similar idea as in Theorem~\ref{T:Ltransitive} is applied here; the condition on gyrators causes the left additions by any element of $G$ to be  automorphisms on $\RCay(G,S)$. Let $u$ and $v$ be two vertices in $\RCay(G,S)$, i.e., two elements in $G$. Then  $v=g\oplus u$ for some $g\in G$. Suppose $w$ and $z$ are adjacent in $\RCay(G,S)$, that is, $w=z\oplus s$, for some $s\in S$. Adding $g$ on the left yields 
\[
g\oplus w=g\oplus(z\oplus s)=(g\oplus z)\oplus \gyr[g,z](s)=(g\oplus z)\oplus s',
\]
for some $s'\in S$. Hence $g\oplus w$ and $g\oplus z$ are adjacent.
This implies that the map $\phi:x\mapsto g\oplus x$ is an automorphism on $\RCay(G,S)$ sending $u$ to $v$. Therefore $\RCay(G,S)$ is transitive. \end{proof}
The right Cayley graph $\RCay(G_{16},\{8,9\})$ in Example~\ref{E:G16S89} is transitive since $\{8,9\}$ satisfies the gyration condition in Theorem~\ref{T:Rtransitive}. We present other two examples as follows.
\begin{Ex}
\label{E:RCayG16S123}The subset $S=\{1,2,3\}$ has a property that for any $g,g'\in G_{16}$ and any $s\in S$, $\gyr[g,g'](s)=I(s)=s$, which means that it has the required gyrator condition stated in Theorem~\ref{T:Rtransitive}. The R-Cayley graph $\RCay(G_{16},S)$ is isomorphic to $\LCay(G_{16},S)$ shown in Figure~\ref{F:G16S123}, with the same vertex labeling, but different edge labeling. 

\end{Ex}

\begin{Ex}
\label{E:RCayG16S891011}
Consider a subset $S'=\{8,9,10,11\}$ of $G_{16}$. Since the non-identity automorphism $A$ swaps 8 with 9 and 10 with 11, and these four elements are self-inverse, the subset $S$ has the required gyrator condition-$\gyr[g,g'](S)=S$ for all $g,g'\in G_{16}$ and is symmetric. The R-Cayley graph $\RCay(G_{16},S)$ is shown in Figure~\ref{F:G16S891011}. It is easily seen to be transitive. The left addition by $15$ is an automorphism on the graph exchanging the two connected components, and swapping the inner and outer cycles, in particular, it sends $5$ to $8$, $15$ to $0$, and $4$ to $9$. 
\end{Ex}

\begin{figure}[ht]
\begin{center}
\includegraphics[height=5cm]{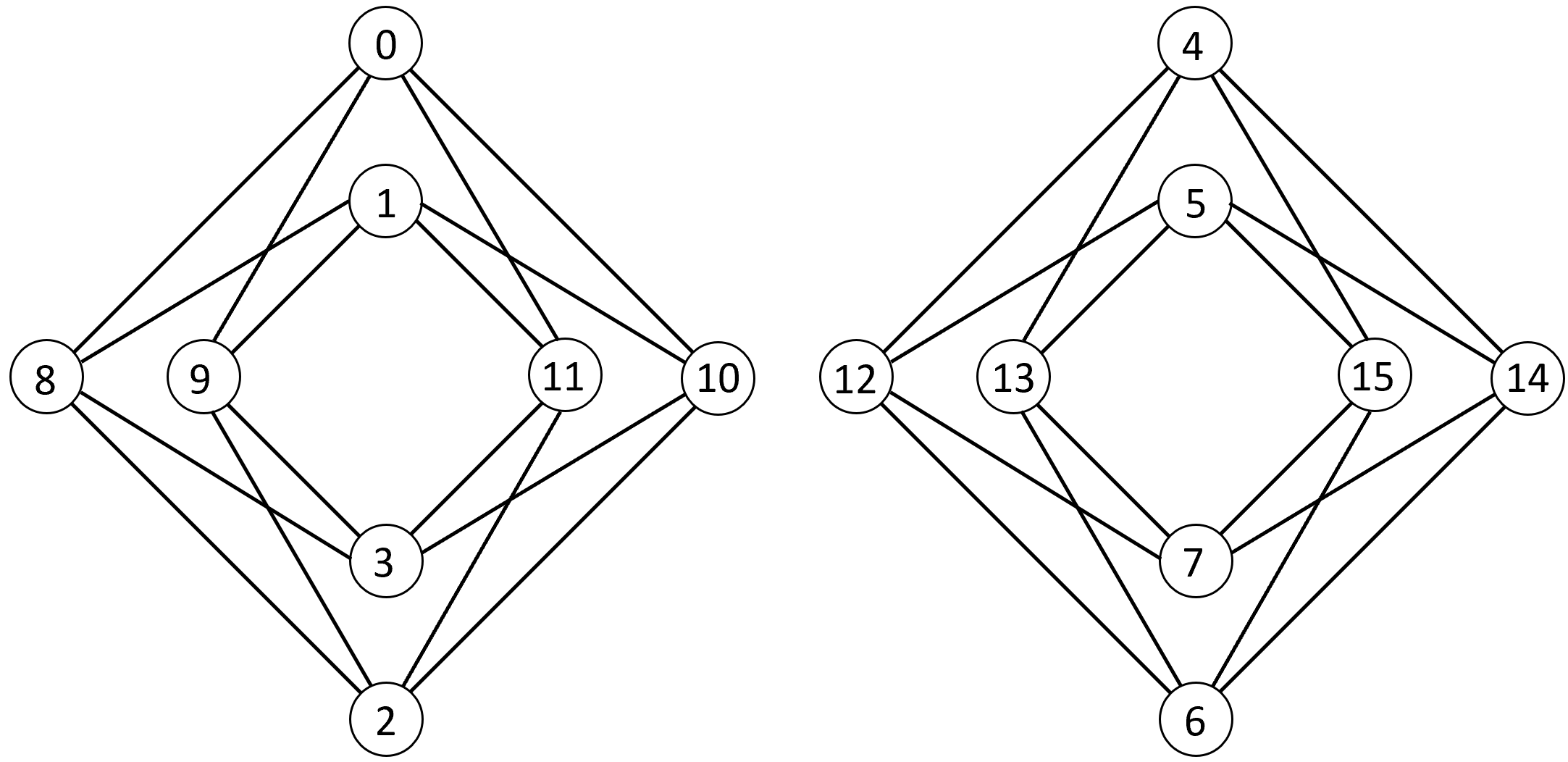} \caption{The R-Cayley graph $\RCay(G_{16},\{8,9,10,11\})$.} \label{F:G16S891011}
\end{center}
\end{figure}

\subsection{Connectedness}
\label{ssec:Rconnectedness}
In this subsection, we show a relationship between the cosets of L-subgyrogroups and the connected components of R-Cayley graphs.

\begin{theorem}
Let $G$ be a gyrogroup and let $S$ be a symmetric subset of $G$ such that it right-generates an L-subgyrogroup $H$ and $\gyr[g,h](S)=S$ for all $g\in G$ and $h\in H$. Then two vertices $u$ and $v$ are in the same connected component of $\RCay(G,S)$ if and only if $u$ and $v$ are in the same left-coset of $H$.
\end{theorem}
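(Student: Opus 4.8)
The plan is to show that, under the stated hypotheses, the connected component of any vertex $g$ in $\RCay(G,S)$ is precisely the left coset $g\oplus H$. Granting this, the theorem follows at once: by Theorem~\ref{T:partition} the left cosets of $H$ partition $G$, so two vertices belong to the same component if and only if they belong to the same left coset.

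First I would handle the inclusion ``component $\subseteq$ left coset''. Since $S$ right-generates $H$, in particular $S\subseteq H$. Hence if $v=u\oplus s$ for some $s\in S$, then $v\in u\oplus H$, and since (by Theorem~\ref{T:partition}) distinct left cosets are disjoint, this forces $u\oplus H=v\oplus H$. Thus any two adjacent vertices of $\RCay(G,S)$ lie in a common left coset, irrespective of edge orientation; induction along a walk then shows that any two vertices in the same connected component lie in the same left coset. (As a by-product, $S\subseteq H$ together with the hypothesis $\gyr[g,h](S)=S$ and Theorem~\ref{T:Rundirected} shows $\RCay(G,S)$ is undirected, although this is not needed here.)

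The real content is the reverse inclusion $g\oplus H\subseteq$ (component of $g$) for every $g\in G$. Fix $h\in H$. Because $S$ right-generates $H$, we may express $h$ as a right-nested sum $(\cdots((s_1\oplus s_2)\oplus s_3)\cdots)\oplus s_n$ with $s_1,\dots,s_n\in S$. Put $w_0=e$ and $w_i=(\cdots((s_1\oplus s_2)\cdots)\oplus s_i)$ for $1\le i\le n$, so $w_n=h$, each $w_i=w_{i-1}\oplus s_i$, and each $w_i\in H$ because $H$ is a subgyrogroup (hence closed under $\oplus$) and $s_1,\dots,s_i\in S\subseteq H$. Applying the left gyroassociative law from Definition~\ref{gyrogroup} gives $g\oplus w_i=g\oplus(w_{i-1}\oplus s_i)=(g\oplus w_{i-1})\oplus\gyr[g,w_{i-1}](s_i)$. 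Since $w_{i-1}\in H$, the hypothesis yields $\gyr[g,w_{i-1}](s_i)\in\gyr[g,w_{i-1}](S)=S$, so $g\oplus w_{i-1}$ and $g\oplus w_i$ are adjacent. Therefore $g=g\oplus w_0,\;g\oplus w_1,\;\dots,\;g\oplus w_n=g\oplus h$ is a walk in $\RCay(G,S)$, and $g\oplus h$ lies in the component of $g$; since $h\in H$ was arbitrary, $g\oplus H$ is contained in the component of $g$.

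Combining the two inclusions gives that the component of $g$ equals $g\oplus H$, completing the proof as indicated in the first paragraph. The step I expect to be the crux is the reverse inclusion: one must realize that ``left translation by $g$'' sends the canonical walk from $e$ to $h$ (read off from a right-generating expression for $h$) to a walk from $g$ to $g\oplus h$, and that the gyration condition $\gyr[g,h](S)=S$ can be invoked at each step precisely because the intermediate vertices $w_i$ remain in $H$. The remaining manipulations are routine uses of the identities in Theorem~\ref{T:identities} and Definition~\ref{gyrogroup}.
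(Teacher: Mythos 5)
Your proof is correct, and its harder half coincides with the paper's: to get from ``same coset'' to ``same component'' the paper also peels off generators one at a time with the left gyroassociative law, using that the intermediate partial sums lie in $H$ so that $\gyr[v,h](S)=S$ keeps the peeled elements in $S$; your explicit walk $g\oplus w_0, g\oplus w_1,\dots,g\oplus w_n$ is just a cleaner bookkeeping of the paper's ``$\vdots$'' step, and it makes visible exactly why the modified generators $s_i'$ stay in $S$. Where you genuinely diverge is the forward direction. The paper proves it by re-associating a path expression with the right gyroassociative law, producing $u=v\oplus h$ with $h\in H$; this uses gyrations of the form $\gyr[s,g]$ with $s\in S$, $g\in G$, which are covered by the hypothesis only via the inversive symmetry $\gyr[b,a]=\gyr[a,b]^{-1}$ (left implicit there), and it presupposes undirectedness so that every traversed edge can be written as a right addition by an element of $S$. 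You instead observe that $S\subseteq H$, so any edge (in either orientation) joins two elements of a common left coset, and since the cosets partition $G$ (Theorem~\ref{T:partition}) an induction along the walk finishes it. This is more elementary and slightly more general: it needs neither the gyration hypothesis nor Theorem~\ref{T:Rundirected} for that direction, and it works for weak connectivity regardless of edge orientation. Both routes are valid; the paper's computation has the side benefit of exhibiting the explicit element $h\in H$ with $u=v\oplus h$, while yours isolates the gyration hypothesis as being needed only for the coset-to-component inclusion.
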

\begin{proof}
First, note that from Theorem~\ref{T:Rundirected}, $\RCay(G,S)$ is undirected. Suppose that $u$ and $v$ are two vertices in the same connected component. Then there is a path connecting $u$ and $v$, that is, 
\[
u=((\cdots((v\oplus s_1)\oplus s_2)\oplus\cdots\oplus s_{n-2})\oplus s_{n-1})\oplus s_n,
\]
for some $s_1,...,s_n\in S$.
Keep moving the parentheses to the right starting from the outer most one yields
\begin{align*}
u&=((\cdots((v\oplus s_1)\oplus s_2)\oplus\cdots\oplus s_{n-2})\oplus s_{n-1})\oplus s_n\\
&=(\cdots((v\oplus s_1)\oplus s_2)\oplus\cdots\oplus s_{n-2})\oplus (s_{n-1}\oplus \gyr[s_{n-1},g] s_n)\\
&=(\cdots((v\oplus s_1)\oplus s_2)\oplus\cdots\oplus s_{n-2})\oplus(s_{n-1}\oplus s_n')\\
&=(\cdots((v\oplus s_1)\oplus s_2)\oplus\cdots\oplus s_{n-2})\oplus h_1\\
&~~\vdots\\
&=v\oplus h_{n-1},
\end{align*}
where $g=(\cdots((v\oplus s_1)\oplus s_2)\oplus\cdots)\oplus s_{n-2}, s_n'=\gyr[s_{n-1},g] s_n\in S$, $h_1=s_{n-1}\oplus s_n'\in H$, and $h_{n-1}\in H$. Hence $u$ and $v$ are in the same $H$-coset.

Conversely, suppose $u$ and $v$ are in the same left-coset. Then $\ominus v\oplus u\in H$ and
\[
u=v\oplus((\cdots(s_1\oplus s_2)\oplus\cdots\oplus s_{n-1})\oplus s_n),
\]
for some $s_1,...,s_n\in S$.
Now, keep moving the parentheses to the left starting from the outer most one yields

\begin{align*}
u&=v\oplus((\cdots(s_1\oplus s_2)\oplus\cdots\oplus s_{n-1})\oplus s_n)\\
&=(v\oplus(\cdots(s_1\oplus s_2)\oplus\cdots\oplus s_{n-1}))\oplus \gyr[v,h_{1}]s_n\\
&=(v\oplus(\cdots(s_1\oplus s_2)\oplus\cdots\oplus s_{n-1}))\oplus s_n'\\
&~~\vdots\\
&=(\cdots((v\oplus s_1)\oplus s_2')\oplus\cdots\oplus s_{n-1}')\oplus s_n', 
\end{align*}
where $h_1=(\cdots(s_1\oplus s_2)\oplus\cdots)\oplus s_{n-1}\in H, s_n'=\gyr[v,h_1]s_n\in S,$ and $s_2',...,s_{n-1}'\in S$. Hence $u$ and $v$ are in the same connected component in the Cayley graph.
\end{proof}
Examples~\ref{E:G16S89} and~\ref{E:RCayG16S891011} are examples of this theorem. We have four left-cosets and hence four connected component in Example~\ref{E:G16S89}, whereas there are two left-cosets represented by two connected components in Example~\ref{E:RCayG16S891011}.

\section*{Acknowledgements} The work of T. Suksumran was supported by Chiang Mai University.

\bibliographystyle{amsplain}\addcontentsline{toc}{section}{References}
\bibliography{References}

\noindent
\footnotesize{T. Suksumran\\
Department of Mathematics, Faculty of Science, Chiang Mai University, Chiang Mai, Thailand
\\
e-mail: teerapong.suksumran@cmu.ac.th\\
\\
P. Khachorncharoenkul, R. Maungchang and K. Prathom\\
School of Science, Walailak University, Nakhon Si Thammarat, Thailand\\
e-mail: prathomjit.kh@mail.wu.ac.th, mate105@gmail.com, kiattisak.pr@mail.wu.ac.th}
\end{document}